\documentclass[11pt]{article}
\usepackage{times}
\usepackage{amsmath,amsfonts,amstext,latexsym,amssymb,amsbsy,amsopn,amsthm,eucal,slashed,amstext,enumerate,accents}
\usepackage{mathrsfs}
\setlength\parindent{2em}

\usepackage{cases}
\usepackage{color}
\usepackage{txfonts}
\usepackage{dsfont}
\usepackage{graphicx}  
\usepackage{comment}

\allowdisplaybreaks[4]
\usepackage{scalerel,stackengine}
\stackMath
\newcommand\reallywidehat[1]{%
\savestack{\tmpbox}{\stretchto{%
  \scaleto{%
    \scalerel*[\widthof{\ensuremath{#1}}]{\kern-.6pt\bigwedge\kern-.6pt}%
    {\rule[-\textheight/2]{1ex}{\textheight}}
  }{\textheight}%
}{0.5ex}}%
\stackon[1pt]{#1}{\tmpbox}%
}
\parskip 1ex

\markright{5555}


\topmargin  = 0.2mm       
\evensidemargin =0.2mm

\setlength{\textheight}{8.75in} \setlength{\textwidth}{6.5in}
\setlength{\columnsep}{0.5in} \setlength{\topmargin}{0in}
\setlength{\headheight}{0in} \setlength{\headsep}{0in}
\setlength{\parindent}{1pc}
\setlength{\oddsidemargin}{0in}  
\setlength{\evensidemargin}{0in}

\usepackage[colorlinks,
            linkcolor=blue,      
            anchorcolor=blue,  
            citecolor=blue,        
            ]{hyperref}

\allowdisplaybreaks[4]

\begin{document}

\setlength{\abovedisplayshortskip}{3mm}
\setlength{\belowdisplayshortskip}{3mm}
\setlength{\abovedisplayskip}{3mm}
\setlength{\belowdisplayskip}{3mm}

\newtheorem{theorem}{Theorem}[section]
\newtheorem{corollary}[theorem]{Corollary}  
\newtheorem{lemma}[theorem]{Lemma}
\newtheorem{definition}[theorem]{Definition}
\newtheorem{question}[theorem]{Question}
\newtheorem{proposition}[theorem]{Proposition}
\newtheorem{remark}[theorem]{Remark}
\newtheorem{example}[theorem]{Example}
\newtheorem{theoremalph}{Theorem}
 \renewcommand\thetheoremalph{\Alph{theoremalph}}

\newenvironment{sequation}{\begin{equation}\small}{\end{equation}}
\newenvironment{tequation}{\begin{equation}\tiny}{\end{equation}}
\renewcommand{\thefootnote}{\fnsymbol{footnote}}


\title
{\bf\Large $W$-entropy formulas and Langevin deformation on the $L^q$-Wasserstein space over Riemannian manifolds}

\author
{\ \ Rong Lei\thanks{ Research of R. Lei has been supported by National Key R\&D Program of China (No. 2020YF0712700) and NSFC No. 12171458. },\quad 
Xiang-Dong Li\footnote{Research of X.-D. Li has been supported by National Key R\&D Program of China (No. 2020YF0712700), NSFC No. 12171458, and Key Laboratory RCSDS, CAS, No. 2008DP173182.},\quad 
Yu-Zhao Wang \footnote{Research of Y.-Z. Wang has been supported by Fundamental Research Program of Shanxi Province(No. 202303021211001).} 
}

\date{}

\maketitle
\numberwithin{theorem}{section}
\setcounter{tocdepth}{2}
\setcounter{secnumdepth}{2}

\begin{abstract}
We first prove the $W$-entropy formula and rigidity theorem for the geodesic flow on the $L^q$-Wasserstein space over a complete Riemannian manifold with  bounded geometry condition. Then we introduce the Langevin deformation on the $L^q$-Wasserstein space over a complete Riemannian manifold,
which interpolates between the $p$-Laplacian heat equation and the geodesic flow on the $L^q$-Wasserstein space, where ${1\over p}+{1\over q}=1$, $1< p, q<\infty$. The local existence, uniqueness and regularity of the Langevin deformation on the $L^q$-Wasserstein space over the Euclidean space and a compact Riemannian manifold are proved for $q\in [2, \infty)$.   We further prove the $W$-entropy-information  formula and the rigidity theorem for the Langevin deformation on the $L^q$-Wasserstein space over an $n$-dimensional complete Riemannian manifold with non-negative Ricci curvature, where $q\in (1,\infty)$.

\vspace{2mm}
\noindent
\textbf{Mathematics Subject Classification (2020)}. Primary 58J35,	58J65; Secondary 35K92, 60H30.

\vspace{2mm}
\noindent
\textbf{Keywords}.  Langevin deformation, $L^q$-Wasserstein space, $W$-entropy
\end{abstract}


\section{Introduction}

In recent years, the optimal transport theory has been an important topic in the interplay among analysis,
PDEs, differential geometry, and probability theory \cite{AGS, JKO, BB, McC, Otto01, OV, V1, V2}. A cornerstone of the
Lott–Villani–Sturm theory \cite{LV, Sturm, RensSturm, EKS, V1, V2} for the synthetic geometry of Ricci curvature on metric measure
spaces is the convexity of the Boltzmann entropy and the R\'enyi entropy along
Wasserstein geodesics. 

Let $(M, g)$ be a complete Riemannian manifold, $dv(x)=\sqrt{{\rm det}g(x)}dx$ the standard 
volume measure, and $p>1$. 
 Let $P_p(M)$ (resp. $P_p^\infty(M)$) be the $L^p$-Wasserstein space (resp. the smooth $L^p$-Wasserstein space) of all probability measures $\rho\hspace{0.2mm} dv$ with density function (resp. with smooth density function) $\rho$ on $M$ such that $\int_M d^p(o, x)\rho(x)\hspace{0.2mm} dv(x)<\infty$, where $d(o, \cdot)$ 
 denotes the distance function from a fixed point  $o\in M$.
 
Let $\mu_0, \mu_1\in P_p(M)$.  In 1940s,  Kantorovich \cite{Kant}, Kantorovich and Rubinstein \cite{KR}  introduced the $L^p$-Wasserstein
distance between $\mu_0$ and $\mu_1$ as follows 
$$
W_p^p\left(\mu_0, \mu_1\right):=\inf_{\pi\in \Pi} \int_{M\times M} d(x, y)^p \mathrm{~d} \pi(x, y),
$$
where $\Pi$ is the set of coupling measures $\pi$ of $\mu_0$ and $ \mu_1$ on $
M\times M$, i.e., $\Pi=\{\pi\in P(M\times M), \pi(\cdot, M)=\mu_0, \pi(M, \cdot)=\mu_1\}$, where $P(M\times M)$ is the set of probability measures on $M\times M$. Moreover, Kantorovich \cite{Kant} (see also \cite{KR, V1}) proved that 
\begin{equation*}\label{L2WD}
\frac1 pW^p_p(\mu_0,\mu_1)=\sup_{\phi\in C_b(M)}\left\{\int\phi d\mu_0+\int\phi^cd\mu_1\right\},
\end{equation*}
where $\phi^c$ is the conjugate of $\phi$, defined by 
$$
\phi^c(x):=\inf_{y\in M}\left[\frac{d(x, y)^p}{p}-\phi(y)\right].
$$

Suppose that  $M=\mathbb{R}^n$, $\mu_0 = \rho_0dx$ and $\mu_1 =\rho_1dx$. In  \cite{BB}, Benamou and Brenier showed that the $L^2$-Wasserstein distance has a natural hydrodynamical interpretation. More precisely, we have the following Benamou-Brenier variational formula
\begin{equation}\label{BBF1}
W^2_2(\mu_0,\mu_1)=\inf\left\{\int^1_0\int_{\mathbb R^n}|{\bf v}(x,t)|^2\rho(x,t)\ dxdt:\partial_t\rho+\nabla\cdot(\rho {\bf v})=0,\; \rho(0)=\rho_0,\;\rho(1)=\rho_1\right\}.
\end{equation}
Moreover, the infimum of the right-hand side in \eqref{BBF1} is achieved by $\rho$ and ${\bf v}=\nabla \phi$ which satisfy the continuity equation and the Hamilton-Jacobi equation
\begin{equation} 
\left\{\begin{aligned}
    &\frac{\partial\rho}{\partial t}+\nabla\cdot\left(\rho\nabla \phi\right)=0, \label{2NDE0}\\
   &\frac{\partial \phi}{\partial t}+\frac12|\nabla \phi|^2=0.
\end{aligned}
\right.
\end{equation}
In view of this, we can regard any solution  $(\rho, \phi)$ of the above equations as a geodesic flow on the tangent bundle $TP_2(\mathbb{R}^n)$ over the $L^2$-Wasserstein space $P_2(\mathbb{R}^n)$. 

In the optimal transport theory, an infinite dimensional Riemannian structure has been introduced by Otto \cite{Otto01} on the $L^2$-Wasserstein space over Euclidean spaces. See also \cite{OV, LV, Lott, LiLi3, LiLi4} for its extension on Riemannian manifolds. More precisely, 
the tangent space of $P_2(M)$ at $\rho dv$ is defined as 
$$
T_{\rho dv}P_2(M)=\left\{~s=-\nabla\cdot (\rho \nabla \phi):\ \phi\in W^{1, 2}(M, \rho dv), \ \ \|s\|^2_2:=\int_M |\nabla\phi|^2\rho dv<+\infty\ \right\},$$ 
where $W^{1, 2}(M, \rho dv)=\left\{~\phi\in L^2(M, \rho dv): \int_M |\nabla \phi|^2\rho dv<\infty\right\}$. When we restrict to $P_2^\infty(M)$, we denote the tangent space of $P_2^\infty(M)$ at $\rho dv$ by $T_{\rho dv}P_2^\infty(M)$. 
For any $s_i=-\nabla\cdot (\rho \nabla \phi_i)\in T_{\rho dv}P_2(M)$, $i=1, 2$, the inner product on $T_{\rho dv}P_2(M)$ is defined by 
$$
\langle\langle s_1, s_2\rangle\rangle =\int_M \langle \nabla \phi_1, \nabla \phi_2\rangle\rho dv.$$

In his seminal work \cite{Perelman}, Perelman introduced the $W$-entropy and proved its monotonicity 
for  Ricci flow. This plays a crucial role in the final solution of the Poincar\'e conjecture. Inspired by Perelman \cite{Perelman}  and related works \cite{Ni, LiXD2}, S. Li and the second named author of this paper \cite{LiLi4, LiLi3} introduced the $W$-entropy and proved its monotonicity for the geodesic flow  on the $L^2$-Wasserstein space over a Riemannian manifold with non-negative Ricci curvature. More precisely, we have   
 \begin{theorem}\label{LLinfty}
 Let $(M, g)$  be a complete Riemannian manifold with bounded geometry condition\footnote{We say that $(M,g)$ satisfies the bounded geometry condition if  the Riemannian curvature tensor $\mathrm{Riem}$ and its covariant derivatives $\nabla^k \mathrm{Riem}$ are uniformly bounded on $M$ for $k = 1, 2, 3$.}. Let 
  $(\rho(t), \phi(t), t\in [0, T])$ be  a smooth geodesic flow in $TP_2^\infty(M)$. Let
 $$H_n(\rho(t)):={\rm Ent}(\rho(t))+{n\over 2}\left(1+\log(4\pi t^2)\right),$$ 
 where ${\rm Ent}(\rho(t)):=\int_M \rho(t)\log \rho(t)dv$ is the Boltzmann entropy. Define the $W$-entropy for the geodesic flow by 
 \begin{eqnarray*}
 W_n(\rho(t)):={d\over dt}(tH_n(\rho(t))).
 \end{eqnarray*}
 Then for all $t>0$, we have
 \begin{eqnarray}
 {1\over t}{d\over dt}W_n(\rho(t))&=&\int_M \left[\left|\nabla^2\phi-{g\over t}\right|^2+\mathrm{Ric}(\nabla \phi, \nabla \phi) \right]\rho\hspace{0.2mm} dv.\label{Wgeo}
 \end{eqnarray}
 In particular, if $\mathrm{Ric}\geq 0$, then ${d\over dt}W_n(\rho(t))\geq 0$.  Moreover, under the condition that $\mathrm{Ric}\geq 0$, ${d\over dt}W_n(\rho(t))=0$ holds at some $t=t_0>0$ if and only if $(M, g)$ is isometric to $\mathbb{R}^n$, and $(\rho, \phi)=(\rho_n, \phi_n)$, where for  $t>0$, $x\in \mathbb{R}^n$, 
 \begin{eqnarray}
 \rho_n(t, x)={1\over (4\pi t^2)^{n/2}}e^{-{\|x\|^2\over 4t^2}},\ \ 
 \phi_n(t, x)={\|x\|^2\over 2t},\label{phiminfty}
 \end{eqnarray}
  is a special solution to the geodesic flow on $T\hspace{0.2mm}P^\infty_2(\mathbb{R}^n)$. 
 \end{theorem}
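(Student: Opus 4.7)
The plan is to differentiate the Boltzmann entropy twice along the geodesic flow and complete a square via the Bochner--Weitzenböck formula; the rigidity half then follows from a classical Hessian-soliton classification.

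First, I would differentiate $F(t):=\mathrm{Ent}(\rho(t))$ along the flow. Using the continuity equation $\partial_t\rho=-\nabla\cdot(\rho\nabla\phi)$ and integration by parts twice gives $F'(t)=-\int_M\rho\,\Delta\phi\,dv$. Differentiating once more, invoking the Hamilton--Jacobi equation $\partial_t\phi=-\tfrac12|\nabla\phi|^2$ and the Bochner--Weitzenböck identity
\begin{equation*}
\tfrac12\Delta|\nabla\phi|^2=|\nabla^2\phi|^2+\langle\nabla\phi,\nabla\Delta\phi\rangle+\mathrm{Ric}(\nabla\phi,\nabla\phi),
\end{equation*}
and integrating by parts to cancel the $\langle\nabla\phi,\nabla\Delta\phi\rangle$ contribution, yields
\begin{equation*}
F''(t)=\int_M\bigl[|\nabla^2\phi|^2+\mathrm{Ric}(\nabla\phi,\nabla\phi)\bigr]\rho\,dv.
\end{equation*}
Since $H_n(\rho(t))=F(t)+\tfrac{n}{2}+\tfrac{n}{2}\log(4\pi t^2)$ and $W_n(\rho(t))=\tfrac{d}{dt}(tH_n)$, a direct computation gives
\begin{equation*}
\tfrac{1}{t}\tfrac{d}{dt}W_n(\rho)=F''(t)+\tfrac{2}{t}F'(t)+\tfrac{n}{t^2}.
\end{equation*}
Substituting the formulas for $F'$ and $F''$ and completing the square through the algebraic identity $|\nabla^2\phi-g/t|^2=|\nabla^2\phi|^2-\tfrac{2}{t}\Delta\phi+\tfrac{n}{t^2}$ produces \eqref{Wgeo}; monotonicity under $\mathrm{Ric}\ge 0$ is then immediate.

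For the rigidity, if $\tfrac{d}{dt}W_n(\rho(t_0))=0$ at some $t_0>0$, then the integrand in \eqref{Wgeo} vanishes $\rho(t_0)\,dv$-almost everywhere; by smoothness and strict positivity of $\rho(t_0)$, this forces $\nabla^2\phi(t_0,\cdot)=g/t_0$ on all of $M$ and $\mathrm{Ric}(\nabla\phi,\nabla\phi)\equiv 0$. The first is a constant-Hessian equation, and by the classical Tashiro theorem on manifolds admitting a non-trivial solution of $\nabla^2\phi=c\,g$, $(M,g)$ must be isometric to $\mathbb{R}^n$ with $\phi(t_0,x)=|x-x_0|^2/(2t_0)+\mathrm{const}$ for some $x_0\in\mathbb{R}^n$. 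After translating the origin, propagating via the Hamilton--Jacobi and continuity equations together with the normalization $\int\rho\,dv=1$ recovers $(\rho_n,\phi_n)$ as in \eqref{phiminfty}.

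The principal obstacle is not algebraic but analytic: justifying every integration by parts and each exchange of $\tfrac{d}{dt}$ with $\int_M$ on a noncompact manifold. The bounded geometry hypothesis on $\mathrm{Riem}$ and its first three covariant derivatives is precisely what provides the uniform heat-kernel and Sobolev-type estimates ensuring that $\rho$, $|\nabla\phi|$, $|\nabla^2\phi|$ and $\mathrm{Ric}(\nabla\phi,\nabla\phi)$ lie in the appropriate weighted $L^1$ spaces and that all boundary terms at infinity vanish, which I would verify through a cutoff exhaustion with bounds uniform in the parameter. This decay analysis is the delicate step that extends the formal compact-case computation to the setting of Theorem~\ref{LLinfty}.
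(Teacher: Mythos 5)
Your proposal is correct and follows essentially the same route as the paper: the paper establishes this theorem (in its general $L^q$ form, Theorem \ref{CEntForm}) by computing $\frac{d}{dt}\mathrm{Ent}$ and $\frac{d^2}{dt^2}\mathrm{Ent}$ via the ($p$-)Bochner formula with a cutoff exhaustion under the stated growth/bounded-geometry hypotheses (Proposition \ref{EDF}), then completing the square exactly as you do, and deducing rigidity from the resulting Hessian equation $\nabla^2\phi = g/t_0$ via a Tashiro--Kotschwar--Ni type classification. Your $p=2$ specialization, including the identity $\tfrac{1}{t}\tfrac{d}{dt}W_n = F'' + \tfrac{2}{t}F' + \tfrac{n}{t^2}$ and the use of $\int_M\rho\,dv=1$ in the square-completion, matches the paper's argument.
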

 
It is natural to ask the question whether we can extend the above result to the $L^q$-Wasserstein space on $\mathbb{R}^n$ or a complete Riemannian manifold, where $q>1$. Let $p=\frac q{q-1}$. 
Following \cite{Otto01, AGS, LiLi3, LiLi4}, 
the tangent space of $P_q(M)$ at $\rho dv$ is defined as 
$$
T_{\rho dv}P_q(M)=\left\{~s=-\nabla\cdot (\rho|\nabla\phi|^{p-2}\nabla \phi):\ \phi\in W^{1, p}(M, \rho dv), \ \ \|s\|_q^q:=\int_M |\nabla\phi|^p\rho dv<+\infty\ \right\},$$ 
where $W^{1, p}(M, \rho dv)=\left\{~\phi\in L^p(M, \rho dv): \int_M |\nabla \phi|^p\rho dv<\infty\right\}$. When we restrict to $P_q^\infty(M)$, we denote the tangent space of $P_q^\infty(M)$ at $\rho dv$ by $T_{\rho dv}P_q^\infty(M)$. 
In view of this, the gradient flow of a smooth functional $\mathcal{V}$ on $P_q(M)$ is given by 
\begin{equation}\label{pgradflow1}
\partial_t\rho+\nabla\cdot\left(\rho \left|\nabla \phi\right|^{p-2}\nabla \phi\right)=0, 
\end{equation}
where $\phi={\delta \mathcal V\over \delta\rho}$ denotes the $L^2$-derivative of $\mathcal V$ with respect to $\rho$. In particular, if $\mathcal V(\rho)=\int_M V(\rho)dv$ with $V\in C^1(\mathbb{R})$, then the corresponding gradient flow reads 
 \begin{equation*}\label{pgradflow2}
\partial_t\rho+\nabla\cdot\left(\rho|\nabla V'(\rho)|^{p-2}\nabla V'(\rho)\right)=0.
\end{equation*}

Similarly to Benamou and Brenier \cite{BB} for $q=2$, Brasco \cite{Brasco} proved the following variational formula
\begin{equation}\label{Wq}
W_q(\mu_0,\mu_1)=\inf\left\{\int^1_0\int_M|\textbf{v}(x,t)|^q\rho(x,t)dvdt:\partial_t\rho+\nabla\cdot(\rho\textbf{v})=0,\; \rho(0)=\rho_0,\;\rho(1)=\rho_1\right\}^{1\over q},
\end{equation}
Moreover, the infimum of the right hand side in \eqref{Wq} is achieved by $\rho$ and $\textbf{v}=|\nabla \phi|^{p-2}\nabla \phi$ which satisfy the following $p$-continuity equation and the $p$-Hamilton-Jacobi equation
\begin{equation} \label{NDE0}
\left\{\begin{aligned}
    &
    \frac{\partial}{\partial t}\rho+\nabla\cdot\left(\rho|\nabla \phi|^{p-2}\nabla \phi\right)=0, \\
  &\frac{\partial}{\partial t}\phi+\frac1p|\nabla \phi|^p=0.
\end{aligned}
\right.
\end{equation} 
In view of this, we can regard any solution $(\rho, \phi)$ to the above equations as a geodesic flow on the tangent bundle $TP_q(M)$ over the $L^q$-Wasserstein space $P_q(M)$ for any $q>1$. 

We now state the first main theorem of this paper, which shows the $W$-entropy formula for the geodesic flow on the $L^q$-Wasserstein space over complete Riemannian manifolds with bounded geometry condition.

\begin{theorem}\label{CEntForm}
Let $p>1$, $q=\frac p{p-1}$. Let $M$ be a complete Riemannian manifold with
bounded geometry condition and $(\rho,\phi)$ a smooth solution to the $L^q$-geodesic equation \eqref{NDE0} with suitable growth condition\footnote{For the exact description of the suitable growth condition, see Proposition \ref{EDF}.}.
Assume that $\int_M \rho(0, x)dv(x)=1$.   Define the relative entropy
\begin{equation*}\label{ReEnt}
{\rm Ent}_{n,p}(\rho,t):=\int_M\rho\log\rho\, dv+\frac nq\left(1+\log\left(c_{n,p}^{-\frac qn}t^q\right)\right),
\end{equation*}
where 
\begin{equation}\label{cpn}
c_{n,p}=(pq^{p-1})^{-\frac np}\pi^{-\frac n2}\frac{\Gamma(\frac n2+1)}{\Gamma(\frac nq+1)}.\end{equation} 
Define the $W$-entropy as follows
\begin{align}\label{WEntropy}
W_{n,p}(\rho,\phi,t):=\frac d{dt}(t{\rm Ent}_{n,p}(\rho,t)).
\end{align}
Then 
\begin{align}\label{NLLWEnt}
\frac{d}{dt}W_{n,p}(\rho,\phi,t)=&t\int_M\left(\left||\nabla \phi|^{p-2}\nabla_i\nabla_j \phi-\frac{a_{ij}}{t}\right|_A^2+|\nabla \phi|^{2p-4}{\rm Ric}(\nabla \phi,\nabla \phi)\right)\rho\,  dv, 
\end{align}
where  $A=(A^{ij})$ is defined by
\begin{equation}\label{Atensor}
A:= {\rm{g}}+(p-2)\frac{\nabla \phi\otimes \nabla \phi}{|\nabla \phi|^2},
\end{equation}
and $a=(a_{ij})$ is the inverse of $(A^{ij})$, and for a second order tensor $T$, $|T|^2_A=\sum_{i,j,k,l}A^{ik}A^{jl}T_{ij}T_{kl}$.

In particular, if $\mathrm{Ric}\geq 0$, then $\frac{d}{dt}W_{n,p}\geq 0$. Moreover, under the condition that $\mathrm{Ric}\geq 0$, 
$\frac{d}{dt}W_{n,p}(\rho,\phi)=0 $ holds at some $t = t_0 > 0$
%
if and only if $(M, g)$ is isometric to the Euclidean space $\mathbb{R}^n$
and $(\rho,\phi)=(\rho_{n, p},\phi_{n, p})$, where for $n\in\mathbb{N}$, $p>1$, $t>0$, $x\in\mathbb{R}^n$, $(\rho_{n, p},\phi_{n, p})$ is a  special solution to  the $L^q$-geodesic flow \eqref{NDE0} on the Euclidean space $\mathbb{R}^n$. More precisely, 
\begin{equation}\label{special}
\rho_{n, p}(x,t)=c_{n,p}t^{-n}
\exp\left\{-(p-1)\frac{\|x\|^q}{(pt)^q}\right\}, \  \ \ \ \phi_{n, p}(x,t)=\frac{\|x\|^q}{qt^{q-1}},
\end{equation} 

\end{theorem}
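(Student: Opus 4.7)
The plan is to establish the identity \eqref{NLLWEnt} by computing the first two time derivatives of the Boltzmann entropy $\mathrm{Ent}(\rho)=\int_M\rho\log\rho\,dv$ along the $L^q$-geodesic flow \eqref{NDE0} and then assembling them via $W_{n,p}=(t\,\mathrm{Ent}_{n,p})'$. The constant $c_{n,p}$ in \eqref{cpn} is chosen precisely so that $\mathrm{Ent}_{n,p}(\rho_{n,p},t)\equiv 0$ on the model solution $(\rho_{n,p},\phi_{n,p})$ of \eqref{special}; this fixes both the $n/q$ coefficient and the logarithmic correction.

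The first time derivative is immediate from the $p$-continuity equation plus one integration by parts (justified by the growth hypothesis supplied by Proposition \ref{EDF}): $\frac{d}{dt}\mathrm{Ent}(\rho)=-\int_M\rho\,\Delta_p\phi\,dv$, where $\Delta_p\phi=\nabla\cdot(|\nabla\phi|^{p-2}\nabla\phi)$. For the second derivative I introduce the momentum covector $\omega_i=|\nabla\phi|^{p-2}\nabla_i\phi$. A direct computation from the $p$-Hamilton--Jacobi equation yields the Burgers-type identity
\begin{equation*}
\partial_t\omega_i+\omega^j\nabla_j\omega_i=0,
\end{equation*}
and taking the divergence while commuting covariant derivatives via $[\nabla_i,\nabla_j]\omega^i=R_{kj}\omega^k$ produces the pointwise Bochner-type formula
\begin{equation*}
-\partial_t\Delta_p\phi=\nabla_i\omega^j\,\nabla_j\omega^i+\omega\cdot\nabla\Delta_p\phi+\mathrm{Ric}(\omega,\omega).
\end{equation*}
The key algebraic step, in which the tensor $A$ of \eqref{Atensor} emerges naturally, is the identification
\begin{equation*}
\nabla_i\omega^j\,\nabla_j\omega^i=|\nabla\phi|^{2p-4}|\nabla^2\phi|_A^2,
\end{equation*}
verified by expanding $\nabla_i\omega_j$ explicitly and using the symmetry of the Hessian.

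Multiplying the Bochner formula by $\rho$, integrating over $M$, and using the continuity equation once more to cancel the drift term $\omega\cdot\nabla\Delta_p\phi$ yields
\begin{equation*}
\tfrac{d^2}{dt^2}\mathrm{Ent}(\rho)=\int_M\rho\,|\nabla\phi|^{2p-4}\bigl(|\nabla^2\phi|_A^2+\mathrm{Ric}(\nabla\phi,\nabla\phi)\bigr)\,dv.
\end{equation*}
Writing $W_{n,p}=\mathrm{Ent}_{n,p}+t(\mathrm{Ent}_{n,p})'$ and differentiating once more, then invoking the elementary identity
\begin{equation*}
\bigl||\nabla\phi|^{p-2}\nabla^2\phi-a/t\bigr|_A^2=|\nabla\phi|^{2p-4}|\nabla^2\phi|_A^2-\tfrac{2}{t}\Delta_p\phi+\tfrac{n}{t^2},
\end{equation*}
which rests on the trace identities $A^{ij}a_{ij}=n$ and $|\nabla\phi|^{p-2}A^{ij}\nabla_i\nabla_j\phi=\Delta_p\phi$, completes the square and produces exactly \eqref{NLLWEnt}.

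For the rigidity statement, if $\mathrm{Ric}\geq 0$ and $\frac{d}{dt}W_{n,p}(t_0)=0$, the non-negative integrand in \eqref{NLLWEnt} must vanish pointwise on $\{\rho(t_0,\cdot)>0\}$. Wherever $\nabla\phi\neq 0$ this forces $|\nabla\phi|^{p-2}\nabla_i\nabla_j\phi=a_{ij}/t_0$; a short calculation using $a_{ij}=g_{ij}+\tfrac{2-p}{p-1}\nabla_i\phi\,\nabla_j\phi/|\nabla\phi|^2$ rewrites this equivalently as $\nabla_i\omega_j=g_{ij}/t_0$. Hence $\omega$ is a closed gradient field, $\omega=\nabla\psi$ with $\nabla^2\psi=g/t_0$, and by Tashiro's rigidity theorem for complete Riemannian manifolds admitting a function whose Hessian is a positive constant multiple of the metric, $(M,g)$ is isometric to $(\mathbb{R}^n,g_{\mathrm{Euc}})$. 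In these Euclidean coordinates $\psi(\cdot,t_0)$ is a translate of $\|x\|^2/(2t_0)$, so $\omega(x,t_0)=x/t_0$; integrating the $p$-Hamilton--Jacobi equation recovers $\phi=\phi_{n,p}$, and the continuity equation together with the normalisation $\int\rho\,dv=1$ then identifies the solution with the model pair $(\rho_{n,p},\phi_{n,p})$ of \eqref{special}. The main obstacle in this program is the algebraic identification $\nabla_i\omega^j\nabla_j\omega^i=|\nabla\phi|^{2p-4}|\nabla^2\phi|_A^2$, which forces the precise definition of $A$, together with the careful justification of the boundary-free integrations by parts under the bounded geometry and growth hypotheses.
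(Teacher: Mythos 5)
Your derivation of the formula \eqref{NLLWEnt} is correct and is in substance the same as the paper's. The Burgers-type identity $\partial_t\omega_i+\omega^j\nabla_j\omega_i=0$ for the momentum field $\omega=|\nabla\phi|^{p-2}\nabla\phi$, followed by taking the divergence and commuting covariant derivatives, is exactly the Kotschwar--Ni $p$-Bochner formula \eqref{p-bochner} in disguise (note $\partial_t\Delta_p\phi=\mathcal{L}(\partial_t\phi)=-\tfrac1p\mathcal{L}(|\nabla\phi|^p)$), and your key identification $\nabla_i\omega^j\,\nabla_j\omega^i=|\nabla\phi|^{2p-4}|\nabla^2\phi|_A^2$ checks out, since $\nabla_i\omega_j=|\nabla\phi|^{p-2}\sum_k\nabla_i\nabla_k\phi\,A^{k}_{\ j}$ and $|T|^2_A=\mathrm{tr}\big((AT)^2\big)$ for symmetric $T$. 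The paper instead quotes \eqref{p-bochner} and routes the computation through Proposition \ref{EDF}, whose real content is the cutoff argument with the functions $\eta_k$ that justifies every integration by parts on a complete non-compact manifold under the stated growth conditions; you acknowledge this point but do not carry it out, which is acceptable for a sketch but is where most of the actual work in Section 3 lies. Your completion of the square, resting on $A^{ik}A^{jl}a_{ij}a_{kl}=n$ and $\mathrm{tr}_A(|\nabla\phi|^{p-2}\nabla^2\phi)=\Delta_p\phi$, matches \eqref{dtwpn} exactly.

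In the rigidity part there is one genuine gap. From the vanishing of the integrand you correctly obtain $|\nabla\phi|^{p-2}\nabla_i\nabla_j\phi=a_{ij}/t_0$, equivalently $\nabla_i\omega_j=g_{ij}/t_0$, and a Tashiro-type rigidity argument (the paper cites Theorem 6.19 of Kotschwar--Ni instead) then gives $M\cong\mathbb{R}^n$ and determines $\phi(\cdot,t_0)$, hence $\phi$ for all $t$ via the Hamilton--Jacobi equation, up to translation and an additive constant. But your final step --- that ``the continuity equation together with the normalisation $\int\rho\,dv=1$ then identifies the solution with the model pair'' --- does not follow as stated: the continuity equation with velocity field $|\nabla\phi|^{p-2}\nabla\phi=x/t$ merely transports the density $\rho(t_0,\cdot)$ along the dilations $x\mapsto(t/t_0)x$, and the normalisation is preserved for \emph{every} such initial density, not only for the Gaussian-type profile $\rho_{n,p}(t_0,\cdot)$. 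Pinning $\rho$ down to \eqref{special} requires an additional input beyond the two equations and mass one (in the paper this is subsumed in the citation of Kotschwar--Ni); as written, your argument determines $\phi$ but not $\rho$.
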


The second main result of this paper is  the $W$-entropy-information formula for the Langevin deformation of flows on the $L^q$-Wasserstein space over complete Riemannian manifolds. See Theorem \ref{Ent-Riccati}.  To save the length of the introduction, we will introduce the Langevin deformation and state this result in Section 2. 

The rest of this paper is organized as follows. In Section 3, we derive some variational formulas for the geodesic flow on the $L^q$-Wasserstein space. In Section 4,  we prove the first main result of this paper, i.e., Theorem \ref{CEntForm}. In Section 5, we prove the local existence and uniqueness to the Cauchy problem for the compressible $p$-Euler equation with damping and 
the Langevin deformation of flows on the $L^q$-Wasserstein space over the Euclidean space and a compact Riemannian manifold for $q\in [2, \infty)$. 
In Section 6, we first prove two variational formulas for the Hamiltonian and Lagrangian on the $L^q$-Wasserstein space, then we prove the second main result of this paper, i.e., Theorem \ref{Ent-Riccati}. 

We would like to point out that the main results in this paper can be naturally extended  to complete Riemannian manifolds with weighted volume measure satisfying the so-called CD$(0, m)$-curvature-dimension condition. To save the length of this paper, we omit them. In the case $p=q=2$, see \cite{LiLi3, LiLi4}. 

\section{Langevin deformation of flows}

In \cite{LiLi4, LiLi3}, S. Li and the second named author of this paper  introduced the Langevin deformation of flows on $TP_2(M)$ as smooth solutions to the following equations
\begin{equation} \label{LLflow}
\left\{\begin{aligned}
   &\partial_t \rho=-\nabla\cdot(\rho \nabla \phi),\\
 &  c^2\left({\partial\phi\over \partial t}+{1\over 2}|\nabla \phi|^2\right)=- \phi-\nabla {\delta \mathcal{V} \over \delta \rho},
  \end{aligned}
\right.
\end{equation}
 where $c\in (0, \infty)$. Heuristically, when $c\rightarrow 0$, we have the gradient flow of $\mathcal{V}$ on $P_2(M)$
 \begin{eqnarray*}
 \partial_t \rho=-\nabla\cdot\left(\rho \nabla {\delta \mathcal{V} \over \delta \rho}\right),
\end{eqnarray*}
 and when $c\rightarrow\infty$, we have the geodesic  flow \eqref{2NDE0} on $TP_2(M)$.  In the case  $M$ is $\mathbb{R}^n$ or a compact Riemannian manifold and $\mathcal{V}(\rho)={\rm Ent}(\rho):=\int_M \rho\log \rho dv$ (i.e., the Boltzmann entropy) or $\mathcal{V}(\rho)={\rm Ent}_\gamma(\rho):={1\over \gamma-1}\int_M \rho^\gamma dv$ (i.e., the R\'enyi entropy) with $\gamma\neq 1$, the local existence and uniqueness to the Cauchy problem for the 
 Langevin deformation \eqref{LLflow} have been proved. If the initial data is small in the sense of Sobolev norm, then the global existence and uniqueness result also hold. In the case $\mathcal{V}(\rho)={\rm Ent}(\rho)=\int_M \rho\log \rho dv$, they proved that the convergence of the Langevin deformation when $c\rightarrow 0$ and $c\rightarrow \infty$ respectively. Moreover, the $W$-entropy-information formula was also proved for the Langevin deformation on the $L^2$-Wasserstein space. For details, see \cite{LiLi3, LiLi4}.

We now introduce the Langevin deformation of flows on $TP_q(M)$ as smooth solutions to the following equations
 \begin{equation} \label{NDEC1}
\left\{\begin{aligned}
    &\frac{\partial\rho}{\partial t}+\nabla\cdot\left(\rho|\nabla \phi|^{p-2}\nabla \phi\right)=0, \\
   &c^p\left(\frac{\partial \phi}{\partial t}+\frac1p|\nabla \phi|^p\right)=-\phi-{\delta \mathcal{V}\over \delta \rho}.
\end{aligned}
\right.
\end{equation}
where $c\in (0, \infty)$. Similarly to \cite{LiLi3, LiLi4}, the Langevin deformation \eqref{NDEC1} interpolates between the $p$-Laplacian heat equation and the geodesic flow on the $L^q$-Wasserstein space. Heuristically, when $c\rightarrow\infty$, we have the geodesic  flow \eqref{NDE0} on $TP_q(M)$. When $c\rightarrow 0$, we have the gradient flow \eqref{pgradflow1} of $\mathcal{V}$ on $P_q(M)$.

The Langevin deformation \eqref{NDEC1} has a close connection with hydrodynamical equations. Indeed, let  ${\bf u}=\nabla \phi$ and ${\bf v}=|\nabla \phi|^{p-2}\nabla \phi$, where $p>1$, then the Langevin deformation $\eqref{NDEC1}$ reads
\begin{equation} \label{NDEC3}
\left\{\begin{aligned}
    &\frac{\partial\rho}{\partial t}+\nabla
    \cdot\left(\rho {\bf v}\right)=0, \\
   &c^p\left(\frac{\partial {\bf u}}{\partial t}+\nabla_{\bf v} {\bf u}\right)=-\bf u-\nabla\frac{\delta\mathcal{V}}{\delta\rho},
\end{aligned}
\right.
\end{equation}
which can be viewed as the  compressible $p$-Euler equation with damping on $M$. 

In the following, we consider $\mathcal{V}(\rho)=\mathrm{Ent}(\rho)=\int_M \rho\log \rho dv$. Then the $L^p$-Langevin deformation of flows  \eqref{NDEC1} reads
\begin{equation} \label{Lan}
\left\{\begin{aligned}
    &\frac{\partial\rho}{\partial t}+\nabla\cdot\left(\rho|\nabla \phi|^{p-2}\nabla \phi\right)=0, \\
   &c^p\left(\frac{\partial \phi}{\partial t}+\frac1p|\nabla \phi|^p\right)=-\phi-\log\rho-1.
\end{aligned}
\right.
\end{equation}
The local existence and uniqueness of the smooth solution to the Cauchy problem for the $L^p$-Langevin deformation of flows  \eqref{Lan} will be proved in Section \ref{LDF} for $c\in(0,+\infty)$ and $p\geq 2$. 

The second main result of this paper is the following $W$-entropy-information formula. When $p=q=2$, it was due to \cite{LiLi3, LiLi4}.

\begin{theorem}\label{Ent-Riccati} 
Let $c>0$ and $M$  an $n$-dimensional complete Riemannian manifold with bounded geometry condition.
Let $(\rho(t),\phi(t))$ be a smooth solution to the Langevin deformation  \eqref{Lan} with reasonable growth condition on $P_q(M)$ (see  Proposition  \ref{EDF} below).  Then 
\begin{align}\label{dtH4}
&\frac{d^2}{dt^2}{\rm Ent}(\rho)+\frac{p-1}{c^p}\frac{d}{dt}{\rm Ent}(\rho)+\frac1{c^p}\int_M|\nabla\phi|^{p-2}|\nabla\log\rho|^2_A\rho\,dv=\int_M|\nabla \phi|^{2p-4}\left(|\nabla^2 \phi|^2_A+{\rm Ric}(\nabla \phi,\nabla \phi)\right)\rho\,dv.
\end{align}
Define the 
relative Boltzmann entropy by
$$
{\rm Ent}_{c,n,p}(\rho(t)):={\rm Ent}(\rho(t))+\frac nq\left(1+\log\left(c_{n,p}^{-\frac qn}w^q(t)\right)\right),
$$
where $c_{n, p}$ is a constant given by \eqref{cpn}, and $w: (0,T]\to\mathbb{R}$ is a smooth solution to the following equation
\begin{equation}\label{pode}
c^p\ddot{w}(t)+(p-1)\dot{w}(t)=\frac{p-1}{p^{q-1}}\frac{\dot{w}^{2-q}(t)}{w(t)}.
\end{equation}
Let $\alpha(t)=\frac{\dot{w}(t)}{w(t)}$.
Define the $W$-entropy for the Langevin deformation \eqref{Lan} by
\begin{equation}\label{Wcn}
{W}_{c,n,p}(\rho(t),t):={\rm Ent}_{c,n,p}(\rho(t))+\eta(t)\frac d{dt}{\rm Ent}_{c,n,p}(\rho(t)),
\end{equation}
where 
 $$\eta(t):=-w^2(t)e^{\frac{(p-1)t}{c^p}}\int^tw^{-2}(s)e^{-\frac{(p-1)s}{c^p}}ds$$ 
 is a solution to 
\begin{equation}\label{eta}
\frac{1+\dot{\eta}(t)}{\eta(t)}=2\alpha(t)+\frac{p-1}{c^p}.
\end{equation}
Define the relative Fisher information by
\begin{equation}\label{Icn}
I_{c,n,p}(\rho(t),\phi(t)):=\int_M|\phi(t)|^{p-2}|\nabla\log\rho(t)|_A^2\rho(t)\,dv-\frac{p-1}{p^{q-1}}\frac{n\alpha^{2-q}(t)}{w^{q}(t)}.
\end{equation}
Then the following $W$-entropy-information formula holds
\begin{align}\label{wcnentropy}
\frac1{\eta(t)}\frac{d}{dt}{W}_{c,n,p}(\rho(t),t)+\frac1{c^p}I_{c,n,p}(\rho(t),\phi(t))
=\int_M\left[\Big||\nabla \phi|^{p-2}\nabla_i\nabla_j\phi-\alpha(t)a_{ij} \Big|^2_A+|\nabla \phi|^{2p-4}{\rm Ric}(\nabla \phi,\nabla \phi)\right]\rho\,dv.
\end{align}
In particular, if $\mathrm{Ric}\geq 0$, then for all $t>0$, the $W$-entropy-information inequality holds
\begin{equation}\label{WEI} 
\frac1{\eta(t)}\frac{d}{dt}{W}_{c,n,p}(\rho(t),t)+\frac1{c^p}I_{c,n,p}(\rho(t),\phi(t))\ge0.
\end{equation}
\end{theorem}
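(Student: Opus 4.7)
The plan is to proceed in three stages: first derive the second-order entropy identity \eqref{dtH4}; then analyze the auxiliary ODE \eqref{pode} for $w$ and the weight equation \eqref{eta} for $\eta$; finally verify \eqref{wcnentropy} by algebraic manipulation. For Stage~1, differentiating ${\rm Ent}(\rho)=\int_M \rho\log\rho\,dv$ along the continuity equation and integrating by parts yields
\[
\tfrac{d}{dt}{\rm Ent}(\rho)=\int_M \rho|\nabla\phi|^{p-2}\langle\nabla\phi,\nabla\log\rho\rangle\,dv=-\int_M \rho\,\Delta_p\phi\,dv,
\]
where $\Delta_p\phi:=\mathrm{div}(|\nabla\phi|^{p-2}\nabla\phi)$. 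To compute $\ddot{\rm Ent}(\rho)$, I would differentiate once more, using the second equation of \eqref{Lan} to evolve $\phi$ and a generalized Bochner--Weitzenb\"ock identity for $p$-harmonic quantities (the variational formulas to be established in Section~6). The Bochner term produces the curvature--Hessian integrand $|\nabla\phi|^{2p-4}(|\nabla^2\phi|^2_A+\mathrm{Ric}(\nabla\phi,\nabla\phi))$; careful integration by parts produces the Fisher-information-type integrand $|\nabla\phi|^{p-2}|\nabla\log\rho|^2_A$; and the linear damping term $-\phi-\log\rho-1$ in \eqref{Lan} produces the first-order contribution $\tfrac{p-1}{c^p}\dot{\rm Ent}(\rho)$. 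Assembling these yields \eqref{dtH4}.

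For Stage~2, setting $\alpha=\dot w/w$ so that $\dot\alpha+\alpha^2=\ddot w/w$, the ODE \eqref{pode} is equivalent to
\[
c^p(\dot\alpha+\alpha^2)+(p-1)\alpha=\frac{p-1}{p^{q-1}}\cdot\frac{\alpha^{2-q}}{w^q},
\]
and the explicit $\eta(t)$ in the statement is the solution of the first-order linear equation \eqref{eta}, so that $(1+\dot\eta)/\eta=2\alpha+(p-1)/c^p$.

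For Stage~3, using ${\rm Ent}_{c,n,p}(\rho)={\rm Ent}(\rho)+(n/q)(1+\log(c_{n,p}^{-q/n}w^q))$ one has $\dot{\rm Ent}_{c,n,p}=\dot{\rm Ent}(\rho)+n\alpha$ and $\ddot{\rm Ent}_{c,n,p}=\ddot{\rm Ent}(\rho)+n\dot\alpha$. Differentiating \eqref{Wcn} and applying \eqref{eta} gives
\[
\tfrac{1}{\eta(t)}\tfrac{d}{dt}W_{c,n,p}=\ddot{\rm Ent}_{c,n,p}+\Bigl(2\alpha+\tfrac{p-1}{c^p}\Bigr)\dot{\rm Ent}_{c,n,p}.
\]
Two tensorial identities, $|\nabla\phi|^{p-2}A^{ij}\nabla_i\nabla_j\phi=\Delta_p\phi$ and $|a|^2_A=n$, yield the expansion
\[
\bigl||\nabla\phi|^{p-2}\nabla_i\nabla_j\phi-\alpha a_{ij}\bigr|^2_A=|\nabla\phi|^{2p-4}|\nabla^2\phi|^2_A-2\alpha\,\Delta_p\phi+n\alpha^2.
\]
Integrating against $\rho\,dv$, using $\int_M \rho\,\Delta_p\phi\,dv=-\dot{\rm Ent}(\rho)$, and substituting \eqref{dtH4}, the right-hand side of \eqref{wcnentropy} becomes $\ddot{\rm Ent}(\rho)+(2\alpha+(p-1)/c^p)\dot{\rm Ent}(\rho)+\tfrac{1}{c^p}\int|\nabla\phi|^{p-2}|\nabla\log\rho|^2_A\rho\,dv+n\alpha^2$. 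Matching against the expression above for $\tfrac{1}{\eta}\tfrac{d}{dt}W_{c,n,p}+\tfrac{1}{c^p}I_{c,n,p}$, the identity collapses to
\[
n\dot\alpha+2n\alpha^2+\tfrac{n(p-1)\alpha}{c^p}-n\alpha^2=\tfrac{n(p-1)\alpha^{2-q}}{c^p p^{q-1}w^q},
\]
which is precisely the Stage~2 reformulation of \eqref{pode}; the inequality \eqref{WEI} then follows immediately when $\mathrm{Ric}\geq 0$.

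The main obstacle is Stage~1, in particular establishing the generalized Bochner--Weitzenb\"ock identity adapted to the $L^q$-Wasserstein setting with the correct $A$-weighted Hessian norm $|\nabla^2\phi|^2_A$, and producing the Fisher-information-type integrand with the weight $|\nabla\phi|^{p-2}$ and the damping $\tfrac{p-1}{c^p}\dot{\rm Ent}$ with the correct signs. The algebra in Stage~3, though purely formal, also hinges on the precise coefficient $\tfrac{p-1}{p^{q-1}}\cdot\tfrac{n\alpha^{2-q}}{w^q}$ in $I_{c,n,p}$ being forced by \eqref{pode}; any deviation would prevent the cross-terms in $\alpha$ and $\dot\alpha$ from cancelling cleanly.
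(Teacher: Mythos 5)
Your proposal is correct and follows essentially the same route as the paper: the second-order entropy identity \eqref{dtH4} is obtained exactly as in the paper via the $p$-Bochner formula \eqref{p-bochner} together with the computation \eqref{dtH3} in Theorem \ref{Lagrange} (your "main obstacle" in Stage 1 is resolved there, with the cancellation of the $\mathcal{L}(|\nabla\phi|^p)$ terms and the identity $\langle\nabla\phi,\nabla\rho\rangle_A=(p-1)\langle\nabla\phi,\nabla\rho\rangle$ producing the damping coefficient), and your Stage 3 completion of the square using ${\rm tr}_A(|\nabla\phi|^{p-2}\nabla^2\phi)=\Delta_p\phi$, $|a|^2_A=n$, and the reduction to the Riccati form \eqref{alphaeq} of \eqref{pode} is precisely the paper's chain \eqref{dtH5}--\eqref{dtwcn2}.
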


The following Proposition gives a special solution to the Langevin deformation of flows \eqref{Lan} on $P_q(\mathbb{R}^n)$. 

\begin{proposition}\label{specialsolutionp}
Let $\alpha(t)=\frac{\dot{w}(t)}{w(t)}$ and $\beta(t)$ a smooth function on $(0, T]$ such that
\begin{align}\label{alphaeq}
&c^p\left(\dot{\alpha}(t)+\alpha^2(t)\right)+(p-1)\alpha(t)=\frac{p-1}{p^{q-1}}\frac{\alpha^{2-q}(t)}{w^q(t)},\\ \label{betaeq}
&c^p\dot{\beta}(t)+\beta(t)=n\log w(t)-\log c_{n,p}-1,
\end{align}
where $c_{n, p}$ is a constant given by \eqref{cpn}. For $x\in\mathbb{R}^n$, $t>0$, let 
\begin{equation}\label{RM for L flow1}
\rho_{c,n,p}(t,x):=c_{n,p}w(t)^{-n}\exp\left(-\frac{p-1}{p^q}\frac{\|x\|^q}{w(t)^q}\right), \qquad\phi_{c,n,p}(t,x):=\frac{\alpha(t)^{q-1}}q\|x\|^q+\beta(t).
\end{equation}
Then $(\rho_{c,n,p},\phi_{c,n,p})$ is a special solution to the Langevin deformation of flows \eqref{Lan} on $P_q(\mathbb{R}^n)$. Moreover,
\begin{align*}
{\rm Ent}(\rho_{c,n,p}(t))=&\int_{\mathbb{R}^n}\rho_{c,n,p}\log\rho_{c,n,p}\,dx=-\frac nq\left(1+\log\left(c_{n,p}^{-\frac qn}w^q(t)\right)\right),\\
{\rm I}(\rho_{c,n,p}(t))=&\int_{\mathbb{R}^n}|\nabla\phi_{c,n,p}|^{p-2}|\nabla\log\rho_{c,n,p}|^2_A\rho_{c,n,p}\,dx=\frac{p-1}{p^{q-1}}\frac{n\alpha^{2-q}(t)}{w^{q}(t)}.
\end{align*}
\end{proposition}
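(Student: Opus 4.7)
The plan is to verify the proposition by direct substitution, leaning systematically on the three conjugate-exponent identities $(p-1)(q-1)=1$, $(q-1)p=q$, and $(p-1)q=p$, which are what make the $q$-Gaussian ansatz closed under the Langevin dynamics.

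For the first (continuity) equation of \eqref{Lan}, I first compute $\nabla\phi_{c,n,p}=\alpha(t)^{q-1}\|x\|^{q-2}x$, from which $|\nabla\phi_{c,n,p}|^{p-2}\nabla\phi_{c,n,p}=\alpha(t)\,x$ follows because $(p-1)(q-1)=1$ and because the radial exponent collapses as $(q-1)(p-2)+(q-2)=pq-p-q=0$. Thus the transport velocity is purely radial, and $\nabla\cdot(\rho_{c,n,p}\alpha(t)x)=n\alpha(t)\rho_{c,n,p}+\alpha(t)\,x\cdot\nabla\rho_{c,n,p}$. A direct computation using $(p-1)q=p$ gives $x\cdot\nabla\rho_{c,n,p}=-p^{1-q}(\|x\|^q/w(t)^q)\rho_{c,n,p}$, while differentiating the $q$-Gaussian in $t$ yields $\partial_t\rho_{c,n,p}=-n\alpha(t)\rho_{c,n,p}+\alpha(t)p^{1-q}(\|x\|^q/w(t)^q)\rho_{c,n,p}$. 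These cancel and the continuity equation holds identically.

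For the second equation, substituting the ansatz and using $(q-1)p=q$ to write $|\nabla\phi_{c,n,p}|^p=\alpha(t)^q\|x\|^q$, I would split the identity by powers of $\|x\|^q$ to reduce it to two scalar ODEs. The $\|x\|^q$-coefficient, after multiplying through by $\alpha^{2-q}$ and simplifying with $(p-1)q=p$, is exactly \eqref{alphaeq}; the $x$-independent remainder is exactly \eqref{betaeq}. This completes the verification that $(\rho_{c,n,p},\phi_{c,n,p})$ solves \eqref{Lan}.

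For the entropy and Fisher information I would use a single $q$-Gaussian integral: under the substitution $u=\frac{p-1}{p^qw(t)^q}\|x\|^q$, polar coordinates reduce $\int_{\mathbb{R}^n}\|x\|^k\rho_{c,n,p}\,dx$ to a Gamma function. The definition \eqref{cpn} of $c_{n,p}$ is precisely the normalisation making $\int\rho_{c,n,p}\,dx=1$, and the first moment gives $\frac{p-1}{p^qw^q}\int\|x\|^q\rho_{c,n,p}\,dx=\frac{n}{q}$, whence ${\rm Ent}(\rho_{c,n,p})=\log c_{n,p}-n\log w(t)-\frac{n}{q}$, which rearranges to the stated identity. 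For the Fisher information, the crucial point is that $\nabla\log\rho_{c,n,p}$ is a scalar multiple of $\nabla\phi_{c,n,p}$, so the definition \eqref{Atensor} of $A$ forces $|\nabla\log\rho|^2_A=(p-1)|\nabla\log\rho|^2$. Collecting $|\nabla\phi|^{p-2}=\alpha^{2-q}\|x\|^{2-q}$ via $(q-1)(p-2)=2-q$, the integrand collapses to a constant multiple of $\|x\|^q\rho_{c,n,p}$, and the same first-moment integral yields $\frac{p-1}{p^{q-1}}\frac{n\alpha(t)^{2-q}}{w(t)^q}$ after applying $p/q=p-1$. The only genuine obstacle is bookkeeping: every simplification uses one of the three conjugate-exponent identities, but the ansatz has been calibrated so that each one kicks in at the right moment.
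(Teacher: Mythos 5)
Your verification is correct and is exactly the direct-substitution computation that the paper leaves implicit (the proposition is stated there without proof): the identities $(p-1)(q-1)=1$, $(q-1)p=q$, $(p-1)q=p$ do collapse the velocity to $\alpha(t)x$, the $\|x\|^q$- and constant-coefficient matching does reproduce \eqref{alphaeq} and \eqref{betaeq}, the Gamma-function normalisation confirms \eqref{cpn}, and the first-moment identity $\frac{p-1}{p^qw^q}\int\|x\|^q\rho\,dx=\frac nq$ together with $|\nabla\log\rho|_A^2=(p-1)|\nabla\log\rho|^2$ yields both displayed formulas. No gaps.
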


Note that, a direct calculation shows that, when $M=\mathbb R^n$,we have
\begin{equation*}
\frac1{\eta(t)}\frac{d}{dt}{W}_{c,n,p}(\rho_{c,n,p}(t),t)+\frac1{c^p}I_{c,n,p}(\rho_{c,n,p}(t),\phi_{c,n,p}(t))=0.
\end{equation*}
In view of this, $(\rho_{c,n,p}, \phi_{c,n,p})$ provides a rigidity model for the $W$-entropy-information associated with the Langevin deformation of flows \eqref{Lan} on $TP_q(M)$ over complete Riemannian manifolds with bounded geometry condition and with $\mathrm{Ric}\geq 0$. See Theorem \ref{RT_WIE} in Section \ref{LaHa} below.

%
%
%

\begin{remark} In the extremal cases $c=0$ and $c=\infty$, we have
\begin{itemize}
  \item [(1)] When $c=0$ in \eqref{Lan}, we have $\phi=-\log\rho-1$, and $\rho$ satisfies the 
  $p$-Laplacian heat equation 
  \begin{equation}\label{BPHE}
(p-1)^{1-p}\partial_t u^{p-1}=\Delta_{p}u:=\nabla\cdot(|\nabla u|^{p-2}\nabla u), \quad u=\rho^{1\over p-1}.
\end{equation}  
In this case, a special solution to \eqref{pode} and \eqref{alphaeq} in Proposition \ref{specialsolutionp} is given by 
  $$w(t)=t^{\frac1p}, \ \ \alpha(t)=\frac1{pt},$$
  and  a special solution to the $p$-heat equation \eqref{BPHE}
  on $\mathbb{R}^n$ is given by  
\begin{equation*}\label{RM for gra}
\rho_{0,n,p}(t,x)=c_{n,p}t^{-\frac np}\exp\left(-\frac{1}{q}\frac{\|x\|^q}{(pt)^{q-1}}\right),\quad \phi_{0,n,p}(t,x)=\frac{\|x\|^q}{q(pt)^{q-1}}+\frac np\log t-\log c_{n,p}-1.
\end{equation*}
 Thus
 $$
  \frac{d^2}{dt^2}{\rm Ent}(\rho)=p\int_M|\nabla \log\rho|^{2p-4}(|\nabla^2 \log\rho|^2_A+{\rm Ric}(\nabla \log\rho,\nabla \log\rho))\rho\,dv.
      $$
 By the definition of the $W$-entropy
 $$
 W_{0,n, p}(\rho(t),t):=\frac d{dt}\Big(t{\rm Ent}_{0,n, p}(\rho(t))\Big),
 $$
 the $W$-entropy-information formula \eqref{wcnentropy} can be rewritten as follows
 \begin{align*}
\frac1{t}\frac{d}{dt}{W}_{0,n, p}(\rho(t),t)
=p\int_M\left[\Big||\nabla \phi|^{p-2}\nabla_i\nabla_j\rho-\frac{1}{pt} a_{ij}\Big|^2_A+|\nabla \phi|^{2p-4}{\rm Ric}(\nabla \phi,\nabla \phi)\right]\rho\,dv,
\end{align*}
 which is equivalent to the $W$-entropy formula for the $p$-Laplacian heat equation \eqref{BPHE} on compact Riemannian manifold proved by  Kotschwar-Ni \cite{KoNi}.
 
 \item [(2)] When $c=\infty$ in  \eqref{Lan}, $(\rho(t),\phi(t))$ satisfies the $L^q$-geodesic flow equations \eqref{NDE0} on $TP_q(M)$.  In this case, a special solution to \eqref{pode} and \eqref{alphaeq} in Proposition \ref{specialsolutionp} is given by 
 $$w(t)=t, \ \ \ \ \alpha(t)=\frac{1} {t}, \ \ \eta(t)=t,$$ 
 and a special solution to \eqref{NDE0} on $TP_q(\mathbb{R}^n)$ is given by $(\rho_{\infty,n,p},\phi_{\infty,n,p})=(\rho_{n,p},\phi_{n,p})$ as in \eqref{special}. 
By the definition of the $W$-entropy
 $$
 W_{\infty,n, p}(\rho(t),t):=\frac d{dt}\Big(t{\rm Ent}_{\infty,n, p}(\rho(t))\Big),
 $$
the $W$-entropy-information formula \eqref{wcnentropy} can be rewritten as follows
\begin{align*}
\frac1{t}\frac{d}{dt}{W}_{\infty,n,p}(\rho(t),t)
=\int_M\left[\Big||\nabla \phi|^{p-2}\nabla_i\nabla_j\phi-\frac1ta_{ij} \Big|^2_A+|\nabla \phi|^{2p-4}{\rm Ric}(\nabla \phi,\nabla \phi)\right]\rho\,dv,
\end{align*}
which is the $W$-entropy formula \eqref{NLLWEnt} in Theorem \ref{CEntForm}.
\end{itemize}
\end{remark}

\section{Variational formulas for the geodesic flow on $L^q$-Wasserstein space}
Let $(M,g)$ be an $n$-dimensional  complete Riemannian manifold with bounded geometry condition. The $p$-Laplacian $\Delta_p$ is defined by 
\begin{equation*}
\Delta_p u:=\nabla\cdot\left(|\nabla u|^{p-2}\nabla u\right), \quad u\in C^\infty(M).
\end{equation*}
The linearization of the $p$-Laplacian $\Delta_p$ at $u\in C^2(M)$ with $\nabla u\neq0$ is given by (see e.g. \cite{KoNi})
\begin{equation*}\label{lin}
\mathcal{L}(\psi):= \nabla\cdot\left(|\nabla u|^{p-2}A(\nabla \psi)\right)
\end{equation*}
for $\psi\in C^\infty(M)$, where $A$ is the tensor defined in  \eqref{Atensor}.
Due to the degeneracy and singularity of the $p$-Laplacian on $\nabla u = 0$, an $\varepsilon$-regularization method is employed, which means that one replaces the linearized operator $\mathcal{L}$ by its approximating operator $\mathcal{L}_{{\varepsilon}}$ defined as 
$$
\mathcal{L}_{{\varepsilon}}\psi:=\nabla\cdot\left( w_{\varepsilon}^{\frac p2-1}A_{\varepsilon}(\nabla \psi)\right),
$$
where $\varepsilon>0$, $w_{\varepsilon}=|\nabla u_{\varepsilon}|^2+\varepsilon$ and $A_{\varepsilon}=g+(p-2)\frac{\nabla u_{\varepsilon}\otimes \nabla u_{\varepsilon}}{w_{\varepsilon}}$.  See \cite{KoNi}.

We first prove the entropy variational formula on $P_p(M)$. When $p=2$, it was due to Lott \cite{Lott}. 


\begin{proposition}\label{1stvar}
{Let $(\rho, \phi): [s_0-\epsilon,s_0+\epsilon]\times[0,1]\to C^\infty(M, \mathbb{R}^{+})\times C^{\infty}(M, \mathbb{R})$ be smooth functions satisfying the nonlinear transport equation 
\begin{equation}\label{NTE}
\frac{\partial \rho}{\partial t}+\nabla\cdot\left(\rho|\nabla \phi|^{p-2}\nabla \phi\right)=0,
\end{equation}
where for any fixed $t\in[0,1]$, $\phi(\cdot,t):[s_0-\epsilon,s_0+\epsilon]\to C^{\infty}(M)$. 
Let $s\mapsto c(s,\cdot)=\rho(s,\cdot)dv$ be a smooth curve in ${P}_q(M)$}. Define the energy functional as follows
$$
E(c(s)):= \frac1p\int^1_0\int_M|\nabla\phi(s,t)|^p\rho(s,t)\, dvdt.
$$
Then, the variation of $E(c(s))$ with respect to $s$ is given by
\begin{equation}\label{ELE}
\frac{d}{ds}E(c(s))=\frac1{p-1}\int_M\phi\frac{\partial\rho}{\partial s}dv\Big|^1_{t=0}-\frac1{p-1}\int^1_0\int_M\left(\frac{\partial \phi}{\partial t}+\frac1p|\nabla \phi|^p\right)\frac{\partial\rho}{\partial s}\,dv dt.
\end{equation}
\end{proposition}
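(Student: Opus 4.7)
The plan is to compute $\frac{d}{ds}E(c(s))$ by differentiating under the integral and then to use the nonlinear transport equation \eqref{NTE}, both directly and after differentiating it in $s$, in order to convert every occurrence of $\phi':=\partial_s\phi$ into a contribution involving only $\rho':=\partial_s\rho$ and $\partial_t\phi$. The small linear system that arises is then easily solved for $\frac{d}{ds}E(c(s))$. Throughout I work under the standing assumption that the bounded geometry of $M$ together with the suitable growth of $(\rho,\phi)$ (as in Proposition~\ref{EDF}) justify all integrations by parts in $x$ and the interchange of $\partial_s$ with integration.

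Differentiating in $s$ yields
$$\frac{d}{ds}E(c(s)) = \int_0^1\!\!\int_M |\nabla\phi|^{p-2}\nabla\phi\cdot\nabla\phi'\,\rho\,dv\,dt + \frac{1}{p}\int_0^1\!\!\int_M |\nabla\phi|^p\,\rho'\,dv\,dt =: I_1 + I_2,$$
and integration by parts in $x$ combined with \eqref{NTE} gives at once $I_1 = \int_0^1\!\!\int_M \phi'\,\partial_t\rho\,dv\,dt$. The main step is to evaluate $\int_0^1\!\!\int_M \phi\,\partial_t\rho'\,dv\,dt$ in two different ways. Differentiating \eqref{NTE} in $s$ gives
$$\partial_t\rho' = -\nabla\cdot\bigl(\rho'|\nabla\phi|^{p-2}\nabla\phi\bigr) - \nabla\cdot\bigl(\rho\,\partial_s(|\nabla\phi|^{p-2}\nabla\phi)\bigr);$$
pairing with $\phi$ and integrating by parts in $x$ produces $\int_M |\nabla\phi|^p\rho'\,dv + \int_M \rho\,\nabla\phi\cdot\partial_s(|\nabla\phi|^{p-2}\nabla\phi)\,dv$, and the $(p-1)$-homogeneity of $v\mapsto |v|^{p-2}v$ yields the algebraic identity
$$\nabla\phi\cdot\partial_s(|\nabla\phi|^{p-2}\nabla\phi) = (p-1)|\nabla\phi|^{p-2}\nabla\phi\cdot\nabla\phi',$$
so that after integrating in $t$ we obtain $\int_0^1\!\!\int_M \phi\,\partial_t\rho'\,dv\,dt = p I_2 + (p-1) I_1$. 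On the other hand, integration by parts directly in $t$ gives
$$\int_0^1\!\!\int_M \phi\,\partial_t\rho'\,dv\,dt = \int_M\phi\,\rho'\,dv\Big|^{1}_{t=0} - \int_0^1\!\!\int_M \partial_t\phi\cdot\rho'\,dv\,dt.$$

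Equating the two expressions and using $I_1 = \frac{d}{ds}E(c(s)) - I_2$ to eliminate $I_1$ leads to
$$(p-1)\,\frac{d}{ds}E(c(s)) + I_2 = \int_M\phi\,\rho'\,dv\Big|^{1}_{t=0} - \int_0^1\!\!\int_M \partial_t\phi\cdot\rho'\,dv\,dt;$$
substituting $I_2 = \tfrac{1}{p}\int_0^1\!\!\int_M|\nabla\phi|^p\rho'\,dv\,dt$ and dividing through by $p-1$ recovers exactly \eqref{ELE}. The principal difficulty I anticipate is the rigorous justification of the two spatial integrations by parts when $M$ is noncompact, which is precisely what the bounded geometry assumption and the growth hypothesis on $(\rho,\phi)$ are designed to handle; on a compact $M$ the argument is a formal calculation. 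The algebraic core of the proof is the $(p-1)$-homogeneity identity above, which is responsible both for the prefactor $1/(p-1)$ in \eqref{ELE} and for the natural appearance of the Hamiltonian combination $\partial_t\phi+\tfrac{1}{p}|\nabla\phi|^p$, whose vanishing characterizes the $L^q$-Hamilton--Jacobi equation and so recovers the Euler--Lagrange equation for $E$.
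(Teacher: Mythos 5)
Your proof is correct and follows essentially the same route as the paper: both differentiate the transport equation in $s$, test against $\phi$, and use the identity $\nabla\phi\cdot\partial_s\bigl(|\nabla\phi|^{p-2}\nabla\phi\bigr)=(p-1)|\nabla\phi|^{p-2}\langle\nabla\phi,\nabla\phi'\rangle$ (which is exactly the paper's $\langle A(\nabla\partial_s\phi),\nabla\phi\rangle=(p-1)\langle\nabla\phi,\nabla\partial_s\phi\rangle$), before integrating by parts in $t$. The only cosmetic difference is that the paper first integrates the transport equation against a general test function $h$ and then differentiates in $s$, whereas you differentiate the equation first; your identity $I_1=\int_0^1\int_M\phi'\,\partial_t\rho\,dv\,dt$ is stated but never used.
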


\proof The proof is similar to the case $p=2$ in Lott \cite{Lott} and S. Li-Li \cite{LiLi3}. 
Direct calculation implies that
\begin{equation}\label{dtEc}
\frac{d}{ds}E(c(s))=\int^1_0\int_M\left(|\nabla\phi|^{p-2}\Big\langle\nabla\phi,\nabla\frac{\partial\phi}{\partial s}\Big\rangle \rho+\frac1p|\nabla\phi|^p\frac{\partial\rho}{\partial s}\right)dv dt.
\end{equation}
For fixed $h\in C^{\infty}(M)$,  from \eqref{NTE} and integration by parts, we have
$$
\int_Mh\frac{\partial\rho}{\partial t}dv=\int_M|\nabla\phi|^{p-2}\langle\nabla\phi,\nabla h\rangle\rho\, dv.
$$
Hence
$$
\int_Mh\frac{\partial^2\rho}{\partial s\partial t}dv
=\int_M\left(|\nabla\phi|^{p-2}\left\langle A\Big(\nabla\frac{\partial\phi}{\partial s}\Big),\nabla h\right\rangle\rho+|\nabla\phi|^{p-2}\langle\nabla h,\nabla\phi\rangle\frac{\partial\rho}{\partial s}\right) dv,
$$
where $A$ is defined in \eqref{Atensor}.
Taking $h=\phi$, we have
\begin{equation}\label{varpst}
\int_M\phi\frac{\partial^2\rho}{\partial s\partial t}dv
=\int_M\left((p-1)|\nabla\phi|^{p-2}\Big\langle\nabla \phi,\nabla\frac{\partial\phi}{\partial s}\Big\rangle\rho+|\nabla\phi|^{p}\frac{\partial\rho}{\partial s}\right) dv.
\end{equation}
Combining \eqref{dtEc} and \eqref{varpst}, we have 
\begin{align*}
\frac{d}{ds}E(c(s))=&\frac1{p-1}\int^1_0\int_M\left(\phi\frac{\partial^2\rho}{\partial s\partial t}-\frac1p|\nabla\phi|^{p}\frac{\partial\rho}{\partial s}\right)dv dt\\
=&\frac1{p-1}\int^1_0\int_M\left(\frac{\partial}{\partial t}\Big(\phi\frac{\partial\rho}{\partial s}\Big)-\Big(\frac{\partial\phi}{\partial t}+\frac1p|\nabla\phi|^{p}\Big)\frac{\partial\rho}{\partial s}\right)dv dt,
\end{align*}
from which the variational formula \eqref{ELE} holds.
\endproof

From \eqref{ELE}, the Euler-Lagrange equation for $E$ is given by the $p$-Hamitlon-Jacobi equation
\begin{equation*}
   \frac{\partial \phi}{\partial s}+\frac1p|\nabla \phi|^p=0.
\end{equation*}
Thus, if a $L^q$-geodesic flow $(\rho(t), \phi(t), t\in [0, T])$  is a smooth curve in $P_q(M)$, then it satisfies \eqref{NDE0}.

\begin{proposition}\label{Evo1} Let $(\rho,\phi)$ be a smooth solution to the $L^q$-geodesic flow equation  \eqref{NDE0}. Then 
\begin{align*}
\frac{d}{dt}\int_M\phi\rho\,dv=&\frac1q\int_M|\nabla\phi|^p\rho\,dv,\\
\frac{d^2}{dt^2}\int_M\phi\rho\,dv=&\frac1q\frac{d}{dt}\int_M|\nabla\phi|^p\rho\,dv=0.
\end{align*}
\end{proposition}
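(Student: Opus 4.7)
My plan for Proposition \ref{Evo1} is to combine the product rule with the two equations of \eqref{NDE0} and then integrate by parts. Structurally, the first identity will encode the relation between the action $\int_M\phi\rho\,dv$ and the kinetic energy $\frac{1}{q}\int_M|\nabla\phi|^p\rho\,dv$ along the $L^q$-geodesic, while the second identity will amount to the conservation of kinetic energy along the flow.

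First I will differentiate under the integral sign to write $\frac{d}{dt}\int_M\phi\rho\,dv = \int_M(\partial_t\phi)\rho\,dv + \int_M\phi(\partial_t\rho)\,dv$. Substituting the $p$-Hamilton-Jacobi equation $\partial_t\phi = -\tfrac{1}{p}|\nabla\phi|^p$ into the first term produces $-\tfrac{1}{p}\int_M|\nabla\phi|^p\rho\,dv$. Substituting the $p$-continuity equation $\partial_t\rho = -\nabla\cdot(\rho|\nabla\phi|^{p-2}\nabla\phi)$ into the second term and integrating by parts produces $+\int_M\langle\nabla\phi,|\nabla\phi|^{p-2}\nabla\phi\rangle\rho\,dv = \int_M|\nabla\phi|^p\rho\,dv$. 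Adding the two contributions and using the conjugacy $1-\tfrac1p = \tfrac1q$ gives the first formula.

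For the second identity, it suffices to show that $\int_M|\nabla\phi|^p\rho\,dv$ is constant in time, since then $\tfrac{d^2}{dt^2}\int_M\phi\rho\,dv = \tfrac1q\tfrac{d}{dt}\int_M|\nabla\phi|^p\rho\,dv = 0$. The strategy is the same: differentiate under the integral, then use both PDEs. The Hamilton-Jacobi equation will yield a term $-\int_M|\nabla\phi|^{p-2}\langle\nabla\phi,\nabla|\nabla\phi|^p\rangle\rho\,dv$ from $\partial_t\phi$, while the continuity equation together with integration by parts will produce the exact opposite $+\int_M|\nabla\phi|^{p-2}\langle\nabla\phi,\nabla|\nabla\phi|^p\rangle\rho\,dv$ from $\partial_t\rho$; these cancel.

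The only genuine obstacle will be analytic rather than algebraic: justifying the interchange of $\frac{d}{dt}$ with the integral and the vanishing of the boundary terms during integration by parts on the non-compact manifold $M$. Under the bounded geometry assumption on $M$ together with the suitable growth condition on $(\rho,\phi)$ invoked throughout the paper (see Proposition \ref{EDF}), both are routine; no assumption on the Ricci curvature or dimension enters the identities themselves.
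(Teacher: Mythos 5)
Your proposal is correct and follows essentially the same route as the paper's proof: differentiate under the integral, substitute the $p$-Hamilton--Jacobi and $p$-continuity equations, integrate by parts, and use $1-\tfrac1p=\tfrac1q$ for the first identity and the exact cancellation of the two $\int_M|\nabla\phi|^{p-2}\langle\nabla\phi,\nabla|\nabla\phi|^p\rangle\rho\,dv$ terms for the second. Your closing remark on justifying the integration by parts under the growth conditions of Proposition \ref{EDF} is a reasonable addition that the paper leaves implicit here.
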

\proof
By \eqref{NDE0}, direct calculation implies that
\begin{align*}
\frac{d}{dt}\int_M\phi\rho\,dv=&\int_M(\partial_t\rho\phi+\rho\partial_t\phi)\,dv
=-\int_M\nabla\cdot(\rho|\nabla\phi|^{p-2}\nabla\phi)\phi\,dv-\frac1p\int_M|\nabla\phi|^p\rho\,dv=\frac1q\int_M|\nabla\phi|^p\rho\,dv,
\end{align*}
and
\begin{align*}
\frac1q\frac{d}{dt}\int_M|\nabla\phi|^p\rho\,dv=&\frac1q\int_M|\nabla\phi|^p\partial_t\rho\,dv
+\frac pq\int_M|\nabla\phi|^{p-2}\langle\nabla\phi,\nabla\partial_t\phi\rangle\rho\,dv\\
=&-\frac1q\int_M|\nabla\phi|^{p}\nabla\cdot(\rho|\nabla\phi|^{p-2}\nabla\phi)\,dv
-\frac 1q\int_M|\nabla\phi|^{p-2}\langle\nabla\phi,\nabla|\nabla\phi|^p\rangle\rho\,dv
=0.
\end{align*}
\endproof

\begin{proposition}\label{EDF}
 Let $(M,g)$ be a complete Riemannian manifold with bounded geometry condition.  Let $(\rho,\phi)$ be a smooth solution to the $L^q$-geodesic equations \eqref{NDE0} satisfying the following growth condition 
 $$
 \int_M\left[|\nabla\log\rho|^p+|\nabla\phi|^{p}+|\nabla\phi|^{2p-2}+|\nabla^2\phi|_A^{2p-2}+|\Delta_p\phi|\right]\rho\, dv<\infty. 
 $$
 Assume there exist a point $o \in M$, and some functions $C_i, \alpha_i \in C\left([0, T], \mathbb{R}^{+}\right)$, $i=1, 2$, such that
$$
C_1(t) e^{-\alpha_1(t) d^q(x, o)} \leq \rho(t, x) \leq C_2(t) e^{\alpha_2(t) d^q(x, o)}, \quad \forall x \in M, t \in[0, T],
$$
and
$$
\int_M d^{pq}(x, o) \rho(t, x) d v(x)<\infty, \quad \forall t \in[0, T] .
$$
 Then  the following variational formulas hold:
\begin{align}\label{HJEnt1}
\frac{d}{d t}{\rm Ent}(\rho)=&\int_M|\nabla \phi|^{p-2}\langle\nabla \phi,\nabla\rho\rangle \, dv=-\int_M\rho\Delta_{p}\phi\,dv,
\end{align}
\begin{align}\label{HJEnt2}
\frac{d^2 }{d t^2}{\rm Ent}(\rho)=&\int_M|\nabla \phi|^{2p-4}\left(|\nabla^2 \phi|_A^2+{\rm Ric}(\nabla \phi,\nabla \phi)\right)\rho\,dv,
\end{align}
where $A$ is  defined in \eqref{Atensor},  and $\langle X,Y\rangle_A=\sum_{ij}A^{ij}X_iY_j$ for all $X, Y\in C^\infty(\Gamma(TM))$.
\end{proposition}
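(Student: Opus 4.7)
The two identities are the natural "first and second derivative of entropy along the flow" computations, so the plan is to differentiate under the integral sign and use the two equations in \eqref{NDE0}, with integration by parts justified by the growth assumptions.

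For \eqref{HJEnt1}, I would compute directly:
\begin{align*}
\frac{d}{dt}{\rm Ent}(\rho)=\int_M(1+\log\rho)\partial_t\rho\, dv=-\int_M(1+\log\rho)\nabla\cdot\bigl(\rho|\nabla\phi|^{p-2}\nabla\phi\bigr)dv.
\end{align*}
Integration by parts (the constant contributes $0$) gives $\int_M|\nabla\phi|^{p-2}\langle\nabla\phi,\nabla\rho\rangle dv$, and a second integration by parts converts this to $-\int_M\rho\,\Delta_p\phi\, dv$. The boundary terms vanish because of bounded geometry and the assumed growth of $\rho$, $|\nabla\phi|$, $|\nabla\log\rho|$: using an exhaustion $\{B_R(o)\}$ and a cutoff with $|\nabla\chi_R|\le C/R$, the cutoff error is controlled by $\int|\nabla\phi|^{p-1}\rho\, dv$ (finite by H\"older from the hypotheses), and it tends to $0$ as $R\to\infty$.

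For \eqref{HJEnt2}, I would differentiate the expression $-\int_M\rho\,\Delta_p\phi\, dv$ obtained in \eqref{HJEnt1} and split into two terms,
\begin{equation*}
\frac{d^2}{dt^2}{\rm Ent}(\rho)=-\int_M\partial_t\rho\cdot\Delta_p\phi\, dv-\int_M\rho\,\partial_t(\Delta_p\phi)\, dv.
\end{equation*}
Using the continuity equation and one integration by parts, the first term equals $-\int_M\rho\,|\nabla\phi|^{p-2}\langle\nabla\phi,\nabla\Delta_p\phi\rangle dv$. For the second, I use the fact that for a smooth family $\phi(t)$ one has $\partial_t\Delta_p\phi=\mathcal{L}(\partial_t\phi)$, where $\mathcal{L}$ is the linearization of $\Delta_p$ at $\phi$ described in the excerpt, and the Hamilton–Jacobi equation yields $\partial_t\phi=-\frac{1}{p}|\nabla\phi|^p$, hence $\partial_t\Delta_p\phi=-\frac{1}{p}\mathcal{L}(|\nabla\phi|^p)$.

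The decisive ingredient is then the $p$-Bochner–Weitzenb\"ock identity (Kotschwar–Ni)
\begin{equation*}
\tfrac{1}{p}\mathcal{L}(|\nabla\phi|^p)=|\nabla\phi|^{2p-4}\bigl(|\nabla^2\phi|_A^2+{\rm Ric}(\nabla\phi,\nabla\phi)\bigr)+\langle|\nabla\phi|^{p-2}\nabla\phi,\nabla\Delta_p\phi\rangle,
\end{equation*}
which reduces to the classical Bochner formula when $p=2$ and can be derived by direct tensor computation in normal coordinates (differentiating $|\nabla\phi|^p$ twice and commuting covariant derivatives to produce the Ricci term). Substituting this into $-\int_M\rho\,\partial_t(\Delta_p\phi)\, dv$, the two $\langle\nabla\phi,\nabla\Delta_p\phi\rangle$-contributions cancel exactly, leaving \eqref{HJEnt2}.

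The main obstacle is the analytic justification: $\Delta_p$ is degenerate on $\{\nabla\phi=0\}$, so both the linearization and the Bochner identity should be established first for the regularized operator $\mathcal{L}_\varepsilon$ with $w_\varepsilon=|\nabla\phi|^2+\varepsilon$ and $A_\varepsilon=g+(p-2)\nabla\phi\otimes\nabla\phi/w_\varepsilon$, then pass to $\varepsilon\to 0^+$ by dominated convergence. The integrability hypotheses on $|\nabla\phi|^{p}$, $|\nabla\phi|^{2p-2}$, $|\nabla^2\phi|_A^{2p-2}$, $|\Delta_p\phi|$ and $|\nabla\log\rho|^p$ supply uniform $\varepsilon$-dominants for all integrands produced in the computation, while the pointwise two-sided Gaussian-type bound on $\rho$ and the polynomial moment $\int d^{pq}\rho\, dv<\infty$ together with bounded geometry ensure that every boundary integral over $\partial B_R(o)$ goes to $0$ as $R\to\infty$; this is the place where the hypotheses are actually used.
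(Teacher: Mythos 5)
Your proposal is correct and follows essentially the same route as the paper: both proofs differentiate the entropy under the integral, use the continuity and Hamilton--Jacobi equations together with the linearization $\mathcal{L}$ and the Kotschwar--Ni $p$-Bochner formula \eqref{p-bochner}, exploit the exact cancellation of the $\langle\nabla\phi,\nabla\Delta_p\phi\rangle$ terms, and justify all integrations by parts via compactly supported cutoffs and dominated convergence under the stated growth hypotheses. The only difference is cosmetic: you differentiate $-\int_M\rho\,\Delta_p\phi\,dv$ while the paper differentiates the equivalent expression $\int_M|\nabla\phi|^{p-2}\langle\nabla\phi,\nabla\rho\rangle\,dv$, which reorganizes but does not change the computation.
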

\begin{proof}[Proof] 
Let $\eta_k$ be an increasing sequence of functions in $C_0^{\infty}(M)$ such that $0 \le \eta_k \le 1$,
$\eta_k = 1$ on $ B(o, k)$ , $\eta_k = 0$ on $M \backslash B(o, 2k)$, and 	$\eta_k	 \le {1\over k}$. 
Let $(\rho, \phi)$ be a smooth solution to Eq. \eqref{NDE0}. Integrating by parts, we have
\begin{align*}
\frac{d }{d t}\int_M(\rho\log\rho) \eta_kdv
=&\int_M|\nabla \phi|^{p-2}\langle\nabla \phi,\nabla\rho\rangle \eta_k\,dv+\int_M|\nabla \phi|^{p-2}\langle\nabla \phi,\nabla\eta_k\rangle(1+\log\rho)\rho\,dv\\
:=&I_1(k)+I_2(k).
\end{align*}
Under the assumption of proposition, we have
$$
\int_M|\nabla\phi|^p\rho dv<\infty,\quad\int_M|\nabla\log\rho|^p\rho dv<\infty.
$$
By H\"older's inequality,
\begin{align*}
\int_M|\nabla \phi|^{p-2}\langle\nabla \phi,\nabla\log\rho\rangle\rho\,dv
\leq&\left(\int_M|\nabla\phi|^p\rho dv\right)^{\frac1q}\left(\int_M|\nabla\log\rho|^p\rho dv\right)^{\frac1p}<\infty. 
\end{align*}
Hence $|\nabla \phi|^{p-2}\langle\nabla \phi,\nabla\rho\rangle|\in L^1(M)$. By the Lebesgue dominated convergence
theorem, as $k\rightarrow \infty$, we have
\begin{equation}\label{I1k1}
I_1(k)=\int_M|\nabla \phi|^{p-2}\langle\nabla \phi,\nabla\rho\rangle \eta_k\,dv\rightarrow \int_M|\nabla \phi|^{p-2}\langle\nabla \phi,\nabla\rho\rangle\,dv.
\end{equation}
On the other hand, as $k\rightarrow \infty$, we have 
\begin{align}\label{I1k2}
I_1(k)=&-\int_M\nabla\cdot(\eta_k|\nabla \phi|^{p-2}\nabla \phi)\rho\,dv=-\int_M(\Delta_p\phi)\rho\eta_k\,dv-\int_M|\nabla\phi|^{p-2}\nabla\phi\cdot\nabla\eta_k\rho\,dv\notag\\
\to&-\int_M(\Delta_p\phi)\rho\,dv.
\end{align}
Similarly, noticing $|\nabla\eta_k| \le 1/k$. Under the assumption of proposition, the Lebesgue dominated convergence theorem yields  
\begin{equation}\label{I2k}
I_2(k)=\int_M|\nabla \phi|^{p-2}\langle\nabla \phi,\nabla\eta_k\rangle(1+\log\rho)\rho\,dv\to0 \quad \text{ as }k\to\infty.
\end{equation}
Combining \eqref{I1k1}, \eqref{I1k2} with \eqref{I2k}, we complete the proof of \eqref{HJEnt1}.

By the  $p$-Bochner formula  (see \cite{KoNi})
\begin{equation}\label{p-bochner}
\mathcal{L}(|\nabla \phi|^p)=p|\nabla \phi|^{2p-4}(|\nabla^2 \phi|^2_A+{\rm Ric}(\nabla \phi,\nabla \phi))+p|\nabla \phi|^{p-2}\langle\nabla \phi,\nabla \Delta_{p}\phi\rangle, 
\end{equation}
and integrating by parts, we have 
\begin{small}
\begin{align*}
&\frac{d }{d t}\int_M|\nabla \phi|^{p-2}\langle\nabla \phi,\nabla\rho\rangle \eta_k\, dv\notag=\int_M\left\langle\partial_t(|\nabla \phi|^{p-2}\nabla \phi),\nabla\rho\right\rangle\eta_k+\left\langle|\nabla \phi|^{p-2}\nabla \phi,\nabla\partial_t\rho\right\rangle\eta_k\,dv\notag\\
=&\int_M|\nabla \phi|^{2p-4}(|\nabla^2 \phi|^2_A+{\rm Ric}(\nabla \phi,\nabla \phi))\rho\eta_k\,dv+\frac1p\int_M|\nabla \phi|^{p-2}\Big\langle\nabla |\nabla\phi|^p,\nabla\eta_k\Big\rangle_A\rho\,dv-\int_M(\Delta_p\phi)|\nabla\phi|^{p-2}\langle\nabla\phi,\nabla\eta_k\rangle\rho\,dv\notag\\
:=&I_3(k)+I_4(k)+I_5(k),
\end{align*}
\end{small}
where we used the facts
\begin{align*}\label{ptphi}
\int_M\langle\partial_t(|\nabla \phi|^{p-2}\nabla \phi),\nabla\rho\rangle\eta_k\,dv
=&\int_M|\nabla \phi|^{p-2}\langle\nabla \partial_t \phi,\nabla\rho\rangle_A\eta_k\,dv,
\end{align*}
and
$$
-\int_M|\nabla \phi|^{p-2}\Big\langle\nabla |\nabla\phi|^p,\nabla\rho\Big\rangle_A\eta_k\,dv=\int_M\mathcal{L}(|\nabla\phi|^p)\rho\eta_k\,dv+\int_M|\nabla \phi|^{p-2}\Big\langle\nabla |\nabla\phi|^p,\nabla\eta_k\Big\rangle_A\rho\,dv.
$$
By $|\mathrm{Ric}|\le C$, under the assumption
$\int_M[|\nabla\phi|^{2p-2}+|\nabla^2\phi|^{2p-2}_A]\rho\,dv<\infty$, we have
$$
\int_M\left||\nabla \phi|^{2p-4}(|\nabla^2 \phi|^2_A+{\rm Ric}(\nabla \phi,\nabla \phi))\right|\rho\,dv\\
\le\int_M|\nabla \phi|^{2p-2}(|\nabla^2 \phi|^{2p-2}_A+C)\rho\,dv<\infty.
$$
Using the fact $0 \le\eta_k \le 1$ and $\eta_k \to 1$, the Lebesgue dominated convergence theorem yields
\begin{equation}\label{I3k}
I_3(k)\to\int_M|\nabla \phi|^{2p-4}(|\nabla^2 \phi|^2_A+{\rm Ric}(\nabla \phi,\nabla \phi))\rho\,dv.
\end{equation}
Using again the assumption $\int_M[|\nabla\phi|^{2p-2}+|\nabla^2\phi|^{2p-2}_A]\rho\,dv<\infty$,  we have
\begin{equation*}\label{I4k}
I_4(k)=\frac1p\int_M|\nabla \phi|^{p-2}\Big\langle\nabla |\nabla\phi|^p,\nabla\eta_k\Big\rangle_A\rho\,dv\le(p-1)\int_M|\nabla\phi|^{2p-4}|\nabla^2\phi\nabla\phi|\cdot|\nabla\eta_k|\rho\,dv\to0. 
\end{equation*}
Under the assumption of proposition, we have
$\int_M[(\Delta_p\phi)^p+|\nabla\phi|^p]\rho\,dv<\infty$. 
Using again the fact $0 \le\eta_k \le 1, \eta_k \to 1$ and $|\nabla\eta_k | \le \frac1k$, the Lebesgue dominated
convergence theorem yields
\begin{align}\label{I5k}
I_5(k)=-\int_M(\Delta_p\phi)|\nabla\phi|^{p-2}\langle\nabla\phi,\nabla\eta_k\rangle\rho\,dv\to0.
\end{align}
Combining \eqref{I3k}, \eqref{I4k} with \eqref{I5k}, we complete the proof of \eqref{HJEnt2}.

\end{proof}

\section{$W$-entropy formula for the geodesic flow on  $L^q$-Wasserstein space}
Applying the entropy variational formulas in Proposition \ref{EDF},  we can derive the ${W}$-entropy formula for the geodesic flow \eqref{NDE0} on the $L^q$-Wasserstein space $P_q(M)$. 
\begin{proof}[\bf{Proof of Theorem \ref{CEntForm}}]
By the definition of the ${W}$-entropy \eqref{WEntropy} and the entropy variational formulas in Proposition \ref{EDF}, we obtain
\begin{align}\label{dtwpn}\small
\frac{d}{dt}{W}_{n,p}(\rho,t)=&2\frac{d}{dt}{\rm Ent}_{n,p}(\rho,t)+t\frac{d^2}{dt^2}{\rm Ent}_{n,p}(\rho,t)\notag\\
=&t\int_M\left(\left||\nabla \phi|^{p-2}\nabla_i\nabla_j \phi-\frac{a_{ij}}{t}\right|_A^2+|\nabla \phi|^{2p-4}{\rm Ric}(\nabla \phi,\nabla \phi)\right)\rho\,  dv+\frac nt\notag\\
&+2\int_M|\nabla \phi|^{p-2}\langle\nabla \phi,\nabla\rho\rangle \, dv+2\int_M(\Delta_{p}\phi)\rho\,dv-\frac{n}{t}\notag\\
=&t\int_M\left(\left||\nabla \phi|^{p-2}\nabla_i\nabla_j \phi-\frac{a_{ij}}{t}\right|_A^2+|\nabla \phi|^{2p-4}{\rm Ric}(\nabla \phi,\nabla \phi)\right)\rho\,dv,
\end{align}
where $a_{ij}$ is the inverse matrix of $A^{ij}$ and we used the identity 
\begin{equation*}\label{ID}
{\rm tr}_A(|\nabla \phi|^{p-2}\nabla^2 \phi)=|\nabla \phi|^{p-2}(A^{ij}\nabla_i\nabla_j\phi) =\Delta_p\phi.
\end{equation*}

The rigidity part can be proved as follows. Indeed, under the assumption
${\rm Ric}\ge 0$, if  $\frac{d}{dt}W_{n,p}(\rho,\phi)=0 $ holds at some $t = t_0 > 0$, then the $W$-entropy formula \eqref{NLLWEnt} yields
\begin{equation*}\label{rigeq}
|\nabla \phi|^{p-2}\nabla_i\nabla_j \phi=\frac{a_{ij}}{t}, 
\end{equation*}
which is equivalent to 
\begin{equation*}\label{rigeq2}
\nabla_i\nabla_j \phi=\frac1{t|\nabla \phi|^{p-2}}\left(g_{ij}+(q-2)\frac{\nabla_i\phi\nabla_j\phi}{|\nabla \phi|^{2}}\right). 
\end{equation*}
By the Theorem 6.19 of Kotschwar-Ni in \cite{KoNi}, we can conclude that $M$ is isometric to $\mathbb{R}^n$ and $(\rho,\phi)=(\rho_n,\phi_n)$.

\end{proof}

In the case $p=q=2$, S. Li and the second named author \cite{LiLi3, LiLi4} observed that
\begin{equation*} 
\frac{d}{dt}W_n(\rho,t)=\frac{d^2}{dt^2}(tH_n(\rho(t)))=\frac{d^2}{dt^2}\big(t{\rm Ent}(\rho(t))+nt\log t\big).
\end{equation*}
As a corollary of Theorem \ref{LLinfty}, they derived the following convexity and rigidity theorem. The convexity part was due to Lott \cite{Lott}. 

\begin{theorem} [Lott \cite{Lott}, S. Li-Li \cite{LiLi3, LiLi4}] \label{Lott2} The function $t\mapsto \mathcal{E}(\rho(t)):=t{\rm Ent}(\rho(t))+nt\log t$ is convex  along the $L^2$-geodesic flow $(\rho(t), \phi(t))$  on the $L^2$-Wasserstein space $P_2(M)$ over a Riemannian manifold $(M, g)$ with non-negative Ricci curvature. Moreover, under the assumption ${\rm Ric}\ge 0$, ${d^2\over dt^2} \mathcal{E}(\rho(t))=0$ at some $t=t_0>0$ if and only if $M$ is isometric to $\mathbb R^n$ and $(\rho, \phi)=(\rho_n, \phi_n)$ as given in \eqref{phiminfty}. 
\end{theorem}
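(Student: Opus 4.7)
The plan is to derive this as a direct corollary of Theorem \ref{LLinfty}, by recognizing that $\mathcal{E}(\rho(t))$ differs from $t H_n(\rho(t))$ only by a linear function of $t$, so that second derivatives of the two coincide, and the convexity/rigidity follows from the monotonicity/rigidity of the $W$-entropy.

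First, I would expand the definition $H_n(\rho(t)) = {\rm Ent}(\rho(t)) + \tfrac{n}{2}(1+\log(4\pi t^2))$ to get
\[
t H_n(\rho(t)) = t\,{\rm Ent}(\rho(t)) + n t \log t + \tfrac{n}{2}\bigl(1+\log(4\pi)\bigr)\,t = \mathcal{E}(\rho(t)) + c\,t,
\]
for a constant $c=c(n)$. Since the affine correction $ct$ vanishes under $\tfrac{d^2}{dt^2}$, and since by definition $W_n(\rho(t))=\tfrac{d}{dt}(tH_n(\rho(t)))$, this yields the key identity
\[
\frac{d^2}{dt^2}\mathcal{E}(\rho(t)) \;=\; \frac{d^2}{dt^2}\bigl(tH_n(\rho(t))\bigr) \;=\; \frac{d}{dt}W_n(\rho(t)).
\]

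Second, I would invoke the $W$-entropy formula \eqref{Wgeo} from Theorem \ref{LLinfty}, which gives
\[
\frac{d^2}{dt^2}\mathcal{E}(\rho(t)) \;=\; t\int_M\left[\left|\nabla^2\phi - \tfrac{g}{t}\right|^2 + \mathrm{Ric}(\nabla\phi,\nabla\phi)\right]\rho\,dv.
\]
Under the hypothesis $\mathrm{Ric}\geq 0$, both terms in the integrand are pointwise nonnegative, so $\tfrac{d^2}{dt^2}\mathcal{E}(\rho(t))\geq 0$, which is exactly the convexity assertion.

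Third, for the rigidity statement: if $\tfrac{d^2}{dt^2}\mathcal{E}(\rho(t))$ vanishes at some $t_0>0$, then by the identity above $\tfrac{d}{dt}W_n(\rho(t))|_{t=t_0}=0$, so the rigidity clause of Theorem \ref{LLinfty} forces $(M,g)$ to be isometric to $\mathbb{R}^n$ and $(\rho,\phi)=(\rho_n,\phi_n)$ as in \eqref{phiminfty}. Conversely, for $(\rho_n,\phi_n)$ on $\mathbb{R}^n$, one checks $\nabla^2\phi_n = g/t$ and $\mathrm{Ric}\equiv 0$, so the integrand in \eqref{Wgeo} vanishes identically and $\tfrac{d^2}{dt^2}\mathcal{E}(\rho(t))\equiv 0$. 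Since every technical ingredient (the $W$-entropy monotonicity and its rigidity) is already established in Theorem \ref{LLinfty}, there is no genuine obstacle here; the only substantive step is the bookkeeping identity $\tfrac{d^2}{dt^2}\mathcal{E} = \tfrac{d}{dt}W_n$, which reduces the theorem to a restatement of Theorem \ref{LLinfty} in the language of convexity of the entropy functional along Wasserstein geodesics.
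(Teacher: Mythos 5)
Your proposal is correct and follows essentially the same route as the paper: the paper likewise presents this theorem as a corollary of Theorem \ref{LLinfty} via the identity $\frac{d}{dt}W_n(\rho,t)=\frac{d^2}{dt^2}\bigl(tH_n(\rho(t))\bigr)=\frac{d^2}{dt^2}\bigl(t{\rm Ent}(\rho(t))+nt\log t\bigr)$, which is exactly your bookkeeping step since $tH_n$ and $\mathcal{E}$ differ by the affine term $\frac{n}{2}(1+\log 4\pi)\,t$. Both the convexity and the rigidity then transfer directly from the $W$-entropy formula \eqref{Wgeo}, just as you argue.
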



Similarly, we can prove  the following convexity and rigidity theorem for the $L^q$-geodesic flow on $P_q(M)$.  

\begin{theorem} For $q>1$, the function $t\mapsto  \mathcal{E}(\rho(t))=t{\rm Ent}(\rho(t))+nt\log t$ 
is convex along the  $L^q$-geodesic flow on  the $L^q$-Wasserstein space $P_q(M)$ over a Riemannian manifold $(M, g)$ with non-negative Ricci curvature. Moreover, under the assumption ${\rm Ric}\ge 0$, ${d^2\over dt^2} \mathcal{E}(\rho(t))=0$ at some $t=t_0>0$ if and only if $M$ is isometric to $\mathbb R^n$ and $(\rho, \phi)=(\rho_{n, p}, \phi_{n, p})$ as given in Theorem \ref{CEntForm}. 
\end{theorem}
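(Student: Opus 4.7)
The plan is to deduce this theorem as a direct corollary of Theorem \ref{CEntForm} by relating $\mathcal{E}(\rho(t))$ to the $W$-entropy $W_{n,p}$. First, I would expand the definition of ${\rm Ent}_{n,p}$ to observe that
\begin{equation*}
t\cdot {\rm Ent}_{n,p}(\rho(t),t) = t\,{\rm Ent}(\rho(t)) + nt\log t + C\,t,
\end{equation*}
where $C = \frac{n}{q} + \frac{n}{q}\log c_{n,p}^{-q/n}$ depends only on $n$ and $p$. Thus $\mathcal{E}(\rho(t))$ and $t\cdot {\rm Ent}_{n,p}(\rho(t),t)$ differ by the linear function $C\, t$, so they share the same second derivative. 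Consequently,
\begin{equation*}
\frac{d^2}{dt^2}\mathcal{E}(\rho(t)) = \frac{d^2}{dt^2}\bigl(t\cdot {\rm Ent}_{n,p}(\rho(t),t)\bigr) = \frac{d}{dt} W_{n,p}(\rho,\phi,t).
\end{equation*}

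Next, I would invoke the $W$-entropy formula \eqref{NLLWEnt} of Theorem \ref{CEntForm} to rewrite the right-hand side as
\begin{equation*}
\frac{d^2}{dt^2}\mathcal{E}(\rho(t)) = t\int_M\left(\left||\nabla\phi|^{p-2}\nabla_i\nabla_j\phi - \frac{a_{ij}}{t}\right|_A^2 + |\nabla\phi|^{2p-4}{\rm Ric}(\nabla\phi,\nabla\phi)\right)\rho\,dv.
\end{equation*}
When ${\rm Ric}\ge 0$, the first integrand is the squared norm with respect to the positive-definite tensor $A$ (so non-negative), and the second is a non-negative multiple of a non-negative quadratic form. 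Hence $\frac{d^2}{dt^2}\mathcal{E}(\rho(t))\ge 0$, which is precisely the convexity of $t\mapsto \mathcal{E}(\rho(t))$ along the $L^q$-geodesic flow.

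For the rigidity part, assuming ${\rm Ric}\ge 0$ and $\frac{d^2}{dt^2}\mathcal{E}(\rho(t_0)) = 0$ at some $t_0>0$, the integrand above must vanish $\rho(t_0)\,dv$-almost everywhere. In particular,
\begin{equation*}
|\nabla\phi|^{p-2}\nabla_i\nabla_j\phi = \frac{a_{ij}}{t_0}
\end{equation*}
on the support of $\rho(t_0)$, which is exactly the rigidity equation appearing at the end of the proof of Theorem \ref{CEntForm}. The same application of Theorem 6.19 of Kotschwar--Ni \cite{KoNi} then forces $(M,g)$ to be isometric to $\mathbb{R}^n$ and $(\rho,\phi) = (\rho_{n,p},\phi_{n,p})$ as in \eqref{special}. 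Because Theorem \ref{CEntForm} already absorbs all the analytic difficulties (growth conditions, integration by parts via the cutoff sequence $\eta_k$, and the $p$-Bochner identity), there is essentially no obstacle here; the only substantive check is the elementary algebraic identification above showing that $\mathcal{E}$ and $t\cdot {\rm Ent}_{n,p}$ differ by a function linear in $t$.
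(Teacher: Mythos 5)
Your proposal is correct and follows essentially the same route as the paper: both reduce the statement to the $W$-entropy formula \eqref{NLLWEnt} of Theorem \ref{CEntForm} and then invoke the Kotschwar--Ni rigidity result. The only (harmless) difference is that you obtain the identity $\frac{d^2}{dt^2}\mathcal{E}(\rho(t))=\frac{d}{dt}W_{n,p}(\rho,\phi,t)$ by observing that $\mathcal{E}(\rho(t))$ and $t\,\mathrm{Ent}_{n,p}(\rho(t),t)$ differ by a term linear in $t$, whereas the paper recomputes $\frac{d^2}{dt^2}\mathcal{E}$ directly from the variational formulas of Proposition \ref{EDF} and completes the square.
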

\proof
Indeed, by \eqref{dtwpn} and a direct calculation 
\begin{align*}\label{Wconvex}
\frac{d^2}{dt^2}\mathcal{E}(\rho(t))
=&t\frac{d^2}{dt^2}{\rm Ent}(\rho)+2\frac{d}{dt}{\rm Ent}(\rho)+\frac nt \notag\\
=&2\int_M|\nabla \phi|^{p-2}\langle\nabla \phi,\nabla\rho\rangle \, dv+
t\int_M|\nabla \phi|^{2p-4}\left(|\nabla^2 \phi|_A^2+{\rm Ric}(\nabla \phi,\nabla \phi)\right)\rho\,  dv+\frac nt\notag\\
=&t\int_M\left(\left||\nabla \phi|^{p-2}\nabla_i\nabla_j \phi-\frac{a_{ij}}{t}\right|_A^2+|\nabla \phi|^{2p-4}{\rm Ric}(\nabla \phi,\nabla \phi)\right)\rho\,dv\notag\\
=&\frac{d}{dt}{W}_{n,p}(\rho,t).
\end{align*}
Thus, Theorem 4.2 follows from Theorem \ref{CEntForm}. 
\endproof

\section{Local existence and uniqueness of Langevin deformation}\label{LDF}
In this section, we prove the local existence and uniqueness of solution to the Cauchy problem for the Langevin deformation of flows \eqref{Lan} for fixed $c\in(0,\infty)$. 

To simplify notations and to avoid technical complexity, we only give the proof on Euclidean spaces and we would like to point out that the proof can be extended to compact Riemannian manifolds by applying the following Sobolev embedding theorem. Let $M$ be $\mathbb R^n$ or an $n$-dimensional compact Riemannian manifold. By \cite{Bre, Aubin}, there exists a constant $C_{\text {Sob }}>0$ such that
$$
\|f\|_{\frac{2 n}{n-2}} \leq C_{\mathrm{Sob}}\left(\|\nabla f\|_2+\|f\|_2\right), \quad \forall f \in C^{\infty}(M) .
$$
For $0 < \alpha < 1$, define the H\"older space $C^{0,\alpha}(M)$ equipped with the norm 
$$
\|f\|_{C^{0, \alpha}}:=\|f\|_{\infty}+\sup _{x, y \in M, x \neq y} \frac{|f(x)-f(y)|}{d(x, y)^\alpha},
$$
and define the Sobolev space $H^s(M)$ equipped with the Sobolev norm
$$
\|f\|_{s, 2}=\left(\sum_{|\beta|\leq s}\left\|D^\beta f\right\|_2^2\right)^{1 / 2},
$$
where $\beta$ is a multi-index. For any $\alpha \in(0,1)$, if $s>\alpha+\frac{n}{2}$, then the Bernstein-Rellich-Kondrakov-Morrey continuity embedding theorem holds
\begin{equation}\label{KEmb}
\|f\|_{C^{0, \alpha}} \leq C_{\alpha}\|f\|_{s, 2}.
\end{equation}
In particular, there exists a constant $C>0$ such that for any $s>\frac{n}{2}$, we have
\begin{equation}\label{Sobineq}
  \|f\|_{\infty} \leq C\|f\|_{s, 2} .
\end{equation}


Now we consider $\mathcal{V}(\rho)=\int_M \rho\log\rho dv$. Let $U=(\log\rho,u)^{T}=(\log\rho,v_q)^{T}:M\times[0,T]\to \mathbb R^{n+1}$. Then we can rewrite the Cauchy problem for $\eqref{NDEC3}$ with initial value $(\rho_0, u_0)$ as the following symmetric hyperbolic system
\begin{equation}\label{sym hyper}
\left\{\begin{array}{l}
    A_0^c(U)\partial_tU+\sum_{j=1}^{n}A_j^c(U)\partial_jU+BU=0,\\
    U(0,x)=U_0(x)=(\log\rho_0,u_0)(x),
\end{array}\right.
\end{equation}
where $A_0^c(U)=\mathrm{diag}\{1,c^p,\cdots,c^p\}$, $B=\mathrm{diag}\{0, 1,\cdots,1\}$, 
and for $j\in\{1,2,\cdots, n\}$, $A_j^c(U)$ is an $(n+1)\times(n+1)$ matrix with the entries $\left(A_j^c(U)\right)_{1,1}=|u|^{p-2}u^j$, $\left(A_j^c(U)\right)_{1,j+1}=\left(A_j^c(U)\right)_{j+1,1}=1$, $\left(A_j^c(U)\right)_{k,k}=c^p|u|^{p-2}u^j$ for $2\leq k\leq n+1$, and $\left(A_j^c(U)\right)_{k,l}=0$ otherwise.


Applying  Theorem 2 in \cite{Kato1975} by Kato, we obtain the following local existence and uniqueness of solution to the Cauchy problem for the symmetric hyperbolic system \eqref{sym hyper}. 
\begin{theorem}\label{Kato thm}
Let $c\in (0, \infty)$ and $M=\mathbb R^n$ or an $n$-dimensional compact Riemannian manifold.  Let $s$ be an integer such that $s>\frac{n}{2}+1$. Suppose $p\geq 2$. Then there exists a bounded open subset $D$ of $H^{s}(M; \mathbb R^{n+1})$ such that for any $U_0\in D$, there exists a unique solution $U$ of \eqref{sym hyper} defined on $[0,T]$ for some $T>0$ and 
\begin{equation*}
U\in C\left([0,T];D\right)\cap C^1\left([0,T];H^{s-1}(M; \mathbb R^{n+1})\right).
\end{equation*} 
More precisely, if we take $U_{00}=(\rho_{00},u_{00})$ such that $U_{00}\in H^s(M;\mathbb R^{n+1})\cap C_c^\infty(M;\mathbb R^{n+1})$ and $\rho_{00}\geq \delta_1>0$, $|u_{00}|\geq \delta_2>0$, then the open subset $D$ can be taken as 
$$D=\left\{U=(\log\rho,u): \|U-U_{00}\|_{H^s}<K\right\}.$$   
\end{theorem}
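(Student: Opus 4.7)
The plan is to cast the Cauchy problem for the Langevin deformation \eqref{Lan} as a symmetric hyperbolic system in the sense of Kato \cite{Kato1975} and then invoke Kato's Theorem 2 directly. The setup is already written down in the statement: with $U=(\log\rho,u)^T$ and $u=|\nabla\phi|^{p-2}\nabla\phi$, the system takes the form $A_0^c(U)\partial_tU+\sum_jA_j^c(U)\partial_jU+BU=0$. The first task is therefore a verification step rather than a derivation: I would check by direct inspection of the entry pattern described in the excerpt that each $A_j^c(U)$ is symmetric (the off-diagonal entries in the first row/column are both $1$, and all other nontrivial entries are on the diagonal), and that $A_0^c(U)=\mathrm{diag}(1,c^p,\ldots,c^p)$ is symmetric and positive definite with eigenvalues uniformly bounded below by $\min(1,c^p)>0$ for every fixed $c\in(0,\infty)$. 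Thus the system is uniformly symmetric hyperbolic.

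The next step is to check the regularity of the coefficient matrices as functions of $U$. The only nontrivial $U$-dependence enters through $|u|^{p-2}u^j$. The key observation is that $u\mapsto|u|^{p-2}u$ is $C^\infty$ on $\mathbb{R}^n\setminus\{0\}$, while at $u=0$ it is only $C^{\lfloor p\rfloor-1}$ in general; this is precisely why the hypothesis $p\geq 2$ and the lower bound $|u_{00}|\geq\delta_2>0$ on the initial velocity are needed. To exploit this, I would fix an integer $s>\tfrac{n}{2}+1$ and use the Sobolev embedding \eqref{Sobineq}, $\|f\|_\infty\leq C\|f\|_{s,2}$, to choose the radius $K$ of the ball
\[
D=\{U\in H^s(M;\mathbb{R}^{n+1}):\|U-U_{00}\|_{H^s}<K\}
\]
small enough that every $U=(\log\rho,u)\in D$ satisfies $\rho\geq\delta_1/2$ and $|u|\geq\delta_2/2$ pointwise on $M$. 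On this open set the coefficient matrices $A_0^c$ and $A_j^c$ are therefore smooth functions of $U$, and $B$ is constant, so all regularity hypotheses required by Kato's abstract framework (smoothness of the nonlinear map $U\mapsto(A_0^c(U),A_j^c(U))$ from $H^s$ into the space of bounded symmetric matrix fields, together with the uniform positivity of $A_0^c$) are met.

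With these verifications in place, Kato's Theorem 2 in \cite{Kato1975} applies essentially off the shelf and yields a time $T>0$ and a unique solution
\[
U\in C([0,T];D)\cap C^1([0,T];H^{s-1}(M;\mathbb{R}^{n+1})),
\]
which is exactly the conclusion of Theorem \ref{Kato thm}. On a compact Riemannian manifold the same argument runs verbatim once one replaces the Euclidean Sobolev embedding by the intrinsic Bernstein--Rellich--Kondrakov--Morrey embedding \eqref{KEmb} and interprets the $A_j^c\partial_j$ as the covariant symmetric-hyperbolic principal part in local charts or via a finite partition of unity.

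The main technical obstacle is genuinely the mild regularity of $|u|^{p-2}u$ at the origin; apart from this issue, the verification is essentially algebraic. The restriction to $p\geq 2$ and the assumption $|u_{00}|\geq\delta_2>0$ (together with the Sobolev quantification of the ball $D$) are exactly what is needed to avoid the singular locus $u=0$ uniformly on $M\times[0,T]$, so that Kato's framework applies without modification.
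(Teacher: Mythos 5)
Your proposal is correct and follows essentially the same route as the paper: rewrite the system in Kato's symmetric hyperbolic form, note that the only nontrivial coefficient dependence is through $f(u)=|u|^{p-2}u$, use the Sobolev embedding and the lower bound $|u_{00}|\geq\delta_2>0$ to control this map on the ball $D$, and then invoke Theorem 2 of \cite{Kato1975}. The only difference is one of detail: where you argue softly that the coefficient map is smooth on $D$ after shrinking $K$, the paper explicitly verifies the two quantitative hypotheses of Kato's theorem (the uniform bound $\sup_{U\in D}\|A_j^c(U)\|_{H^s_{ul}}\leq C$ and the Lipschitz estimate $\|A_j^c(U)-A_j^c(V)\|_{L^2_{ul}}\leq L\|U-V\|_{L^2}$) via a mean-value expansion $f(u)=f(u_{00})+\nabla f(\xi)(u-u_{00})$ with $\nabla f(\xi)=(p-2)|\xi|^{p-4}\xi\otimes\xi+|\xi|^{p-2}I$ bounded for $p\geq 2$.
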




\begin{proof}
According to the Theorem 2 in \cite{Kato1975}, we consider the operator $G_j(t): D\to H^s_{ul}(M; \mathbb R^{(n+1)\times(n+1)})$ defined by $G_j(t)[U]=A_j^c(U)$ for $j=1,\cdots,n$, where $H^s_{ul}(M)$ is the uniformly local Sobolev space defined in \cite{Kato1975} and 
\begin{equation*}
    \|u\|_{s,ul}:=\|u\|_{H^s_{ul}(M)}=\sup_{|\alpha|\leq s}\sup_{x\in M}\left\{\int_{d(y,x)<1}|D^\alpha u(y)|^2dy\right\}^{1\over 2}. 
\end{equation*}
Now we only have to verify the coefficients $\{A_j^c$, $j=1,\cdots, n\}$ satisfy the uniformly boundness and Lipschitz condition since $A_0^c$ and $B$ are constant matrices. First, we verify 
\begin{equation}\label{uni bdd}
\sup_{U\in D}\|A_j^c(U)\|_{H^s_{ul}}\leq C, \quad j=1,\cdots,n.
\end{equation} 
Take $U=(\log\rho,u)\in D$ and denote $f(u)=|u|^{p-2}u$ with $p\geq 2$. 
By the Lagrange mean value theorem, there exists some $\theta\in[0,1]$ and   
$\xi=\theta u_{00}+(1-\theta)u$ such that 
\begin{equation*}
    f(u)=f(u_{00})+\nabla f(\xi)(u-u_{00}), 
\end{equation*}
where $\nabla f(\xi)=(p-2)|\xi|^{p-4}\xi\otimes \xi+|\xi|^{p-2}I$. 

By the Sobolev inequality \eqref{Sobineq}, if $s> {n\over 2}+1$, then we have 
\begin{equation*}
    \|u\|_{L^\infty}\leq \|U\|_{L^\infty}\leq C_s\|U\|_{H^s}\leq C^\prime, \quad \forall U\in D.
\end{equation*}
Note that $\sup_{U\in D}\|\xi\|_{\infty}\leq \sup_{\theta\in[0,1]}\sup_{U\in D}\|\theta u_{00}+(1-\theta)u\|_{H^s}\leq C^{\prime\prime}$. 
Thus we have 
\begin{equation*}
\begin{aligned}
\|f(u)\|_{H^s}
&\leq \|f(u_{00})\|_{H^s}+\|\nabla f(\xi)\|_\infty\|u-u_{00}\|_{H^s}\leq \|f(u_{00})\|_{H^s}+C^{\prime\prime\prime}\|u-u_{00}\|_{H^s}. 
\end{aligned}
\end{equation*}
Choosing $U_{00}\in H^s(M;\mathbb R^{n+1})\cap C_c^\infty(M;\mathbb R^{n+1})$ such that $|u_{00}|\geq \delta_2>0$ and noticing that $s>{n\over 2}+1$, we can verify that $\|f(u_{00})\|_{H^s}$ is finite. Thus we have $\|f(u)\|_{H^s}<+\infty$. Then we obtain \eqref{uni bdd}.

Next we verify the Lipschitz condition. This requires that there exists a constant $L>0$ such that 
\begin{equation}\label{Lip cond}
    \|A_j^c(U)-A_j^c(V)\|_{L^2_{ul}}\leq L\|U-V\|_{L^2}, \quad \forall U,V\in D.
\end{equation}
Noticing that $\|\nabla f(u)\|_{L^\infty}\leq C^{\prime\prime}$, we have
\begin{equation*}
\begin{aligned}
\|A_j^c(U)-A_j^c(V)\|_{L^2_{ul}}^2 &\leq  (n+1)\max\{1,c\}\int_{\mathbb R^n}\left|f(u)-f(v)\right|^2 dx
\leq (n+1)\max\{1,c\}C^{\prime\prime}\|U-V\|_{L^2}^2,
\end{aligned}
\end{equation*}
where $U=(\log\rho_1,u)$ and $V=(\log\rho_2, v)$. Thus we obtain \eqref{Lip cond} by taking $L=(n+1)\max\{1,c\}C^{\prime\prime}$. 
\end{proof}

Let $k=[s-\frac{n}{2}]$, where $[x]$ denotes the integer part of $x$ and $s>\frac{n}{2}+1$. By applying the Sobolev embedding theorem again, we have 
\begin{corollary}\label{smooth rho-u}
Let $M$ be $\mathbb R^n$ or an $n$-dimensional compact Riemannian manifold. Let $c\in(0,+\infty)$ and $p\geq 2$. Suppose that $(\rho_0,u_0)\in H^s(M;\mathbb R^{n+1})\cap D$. Then, there exists a constant $T > 0 $ such that the Cauchy problem for the compressible $p$-Euler equation with damping $\eqref{NDEC3}$, where $V(\rho)=\rho\log\rho$, has a unique smooth solution $(\rho,u)$ in $C^1([0, T],C^k(M)\times C^k(M))$.
\end{corollary}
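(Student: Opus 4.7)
The plan is to derive this corollary as a direct consequence of Theorem \ref{Kato thm}, using the Sobolev embedding \eqref{KEmb} to pass from the Sobolev regularity of Kato's abstract theorem to classical $C^k$ regularity. First, I would invoke Theorem \ref{Kato thm} with initial data $U_0=(\log\rho_0,u_0)\in D\subset H^s(M;\mathbb{R}^{n+1})$, which produces a time $T>0$ and a unique solution
$$U=(\log\rho,u)\in C([0,T];D)\cap C^1([0,T];H^{s-1}(M;\mathbb{R}^{n+1})).$$
Since $D$ is a bounded open subset of $H^s$, we have in particular $U\in C([0,T];H^s(M;\mathbb{R}^{n+1}))$.

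Second, with $k=[s-n/2]$ and the assumption $s>n/2+1$, the inequality $s>k+n/2$ holds strictly. Specializing \eqref{KEmb} to integer exponents (applied derivative-by-derivative) yields the continuous Sobolev embedding $H^s(M)\hookrightarrow C^k(M)$. Applying this pointwise in $t$ and using uniform bounds on the $H^s$-norm of $U(t)$ on $[0,T]$, we obtain $U\in C([0,T];C^k(M)\times C^k(M))$. The analogous embedding applied to $H^{s-1}\hookrightarrow C^{k-1}$ gives $\partial_t U\in C([0,T];C^{k-1}(M)\times C^{k-1}(M))$.

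Third, because $U_0\in D$ corresponds to $\rho_0\geq\delta_1>0$, and $\log\rho\in C([0,T];C^k)$ depends continuously on $t$, one can shrink $T$ if necessary so that $\rho(t,x)=e^{\log\rho(t,x)}$ remains strictly positive and uniformly bounded below on $[0,T]\times M$. Since the exponential is smooth, composition preserves the regularity class, and hence $\rho\in C([0,T];C^k(M))$ with $\partial_t\rho\in C([0,T];C^{k-1}(M))$. The same holds for $u=|\nabla\phi|^{p-2}\nabla\phi$. Thus $(\rho,u)$ is a classical solution to the compressible $p$-Euler system with damping \eqref{NDEC3} for $\mathcal{V}(\rho)=\int_M\rho\log\rho\,dv$, as claimed.

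The one delicate point is reconciling the regularity with the target space $C^1([0,T];C^k\times C^k)$ stated in the corollary: Theorem \ref{Kato thm} only gives $\partial_t U\in C([0,T];H^{s-1})$, which embeds into $C^{k-1}$, not $C^k$. This small gap can be closed by using the hyperbolic system \eqref{sym hyper} itself to express $\partial_t U=-(A_0^c)^{-1}(\sum_j A_j^c(U)\partial_j U+BU)$; since the coefficient matrices $A_j^c(U)$ are smooth functions of $U\in H^s$ (with the product/composition estimates in Sobolev spaces valid for $s>n/2$), and $\partial_j U\in C([0,T];H^{s-1})$, a Moser-type product estimate keeps $\partial_t U$ in the expected class. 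In practice, this bootstrap step is the only nontrivial piece; the rest is essentially invoking Theorem \ref{Kato thm} and \eqref{KEmb}.
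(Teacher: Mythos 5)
Your proposal is correct and follows essentially the same route as the paper, which states this corollary as an immediate consequence of Theorem \ref{Kato thm} combined with the Sobolev embedding $H^s\hookrightarrow C^k$ for $k=[s-n/2]$, without writing out further details. Your additional observation about the $C^1$-in-time regularity (that $\partial_t U\in C([0,T];H^{s-1})$ only embeds into $C^{k-1}$) is a genuine subtlety the paper glosses over, though your proposed bootstrap via the equation does not fully close it, since the right-hand side still involves $\partial_j U\in H^{s-1}$; this reflects an imprecision in the corollary's statement rather than a defect of your argument.
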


Now we turn back to the Langevin deformation $\eqref{Lan}$. We need to prove that if the initial value $u(0, \cdot)=\nabla \phi(0, \cdot)$ for some smooth function $\phi(0, \cdot)$, then $u(t, \cdot)$ will keep the gradient structure along $t>0$. To see this, we show the following result.

\begin{theorem}\label{close form}
Let $c \in(0, \infty)$ and $p\geq 2$. Let $M$ be $\mathbb{R}^n$ or an $n$-dimensional compact Riemannian manifold. Let $(\rho, u)$ be the smooth solution to the compressible $p$-Euler equation with damping $\eqref{NDEC3}$. Let $u^*\in\Gamma(\Lambda^1T^*M)$ be the dual of $u$ and denote $\omega=d u^*$. Then
\begin{equation}\label{eq of omega}
\partial_t \omega+d\left(|u|^{p-2}\nabla_u u^*\right)=-c^{-p} \omega .
\end{equation}
Moreover, if $\|\nabla^k u\|_{L^\infty}\leq C$ for $k=0, 1, 2$, then for all $t \in[0, T]$, there exists a constant $C^\prime$ depending on $C$ such that 
$$
\|\omega(t)\|_p \leq\|\omega(0)\|_p e^{\left(C^\prime-c^{-2}\right) t} .
$$
In particular, if $u^*(0, \cdot)$ is a closed form, so is $u^*(t, \cdot)$, i.e., $d u^*(0, \cdot)=0$ implies $d u^*(t, \cdot)=$ 0 on $[0, T]$. Furthermore, the Poincar\'e lemma gives that $u^*$ is locally exact. i.e, there exists a smooth function $\phi$ such that $u=\nabla\phi$ on $t\in[0,T]$.     
\end{theorem}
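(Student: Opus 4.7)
The plan is to derive for $\omega=du^*$ a linear first-order transport equation that is homogeneous in $\omega$, then close it with an $L^p$ Gr\"onwall argument. This simultaneously yields the exponential estimate on $\|\omega(t)\|_p$ and the preservation of closedness $\omega\equiv 0$; the Poincar\'e lemma then supplies the potential $\phi$.

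First I would take the musical dual of the momentum equation in \eqref{NDEC3} (with $\mathcal{V}(\rho)=\int\rho\log\rho\,dv$). Since dualizing commutes with $\nabla$ by metric compatibility, and $\nabla_{\mathbf{v}}u = |u|^{p-2}\nabla_u u$, it reads
\[
\partial_t u^* + |u|^{p-2}\nabla_u u^* = -c^{-p}u^* - c^{-p}d\log\rho.
\]
Applying $d$ and using $d\circ d = 0$ to kill the logarithmic term produces \eqref{eq of omega}. Setting $X:=|u|^{p-2}u$ so that $|u|^{p-2}\nabla_u u^* = \nabla_X u^*$, a coordinate computation gives
\[
[d(\nabla_X u^*)]_{ij} = (\nabla_i X^k)\nabla_k u^*_j - (\nabla_j X^k)\nabla_k u^*_i + X^k\bigl(\nabla_i\nabla_k u^*_j - \nabla_j\nabla_k u^*_i\bigr),
\]
and the Ricci identity rewrites the last parenthesis as $\nabla_k\omega_{ij}$ modulo a curvature contraction $X^k u^l(R_{ikjl}-R_{jkil})$.

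The heart of the proof is two algebraic cancellations. The curvature contraction vanishes because $X^k u^l = |u|^{p-2}u^k u^l$ is symmetric in $(k,l)$, and pair symmetry $R_{ikjl}=R_{jlik}$ combined with the dummy swap $k\leftrightarrow l$ gives $R_{ikjl}u^k u^l = R_{jkil}u^k u^l$. Next, substituting $\nabla_k u^*_j = \omega_{kj} + \nabla_j u^*_k$ in the first two terms, the $\omega$-pieces produce the desired zeroth-order contributions, while the leftover $(\nabla_i X^k)\nabla_j u^*_k - (\nabla_j X^k)\nabla_i u^*_k$ is also killed: expanding $\nabla X$ via Leibniz, the $\nabla|u|^{p-2}$-part vanishes because $\nabla|u|^{p-2}$ is a scalar multiple of $\nabla|u|^2$ (so both terms are symmetric in $(i,j)$), and the remaining $|u|^{p-2}[(\nabla_i u^k)\nabla_j u^*_k - (\nabla_j u^k)\nabla_i u^*_k]$ vanishes once $u^*_k = g_{kl}u^l$ and the dummy $k\leftrightarrow l$ is swapped using the symmetric metric. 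What survives is the manifestly linear, $\omega$-homogeneous transport equation
\[
\partial_t\omega_{ij} + X^k\nabla_k\omega_{ij} + (\nabla_i X^k)\omega_{kj} - (\nabla_j X^k)\omega_{ki} = -c^{-p}\omega_{ij}.
\]

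Finally, testing against $p|\omega|^{p-2}\omega^{ij}$ and integrating over $M$, integration by parts turns the transport term into $\int(\nabla\cdot X)|\omega|^p\,dv$, while the remaining zeroth-order terms are pointwise dominated by $C_p\|\nabla X\|_\infty |\omega|^p$. Since $\|\nabla^k u\|_\infty\leq C$ for $k=0,1,2$ bounds $\|X\|_\infty$, $\|\nabla X\|_\infty$ and $\|\nabla\cdot X\|_\infty$ by some $\tilde C=\tilde C(p,C)$, one obtains
\[
\frac{d}{dt}\|\omega(t)\|_p^p \leq (\tilde C - pc^{-p})\|\omega(t)\|_p^p,
\]
and Gr\"onwall plus the $p$-th root yield the claimed exponential estimate. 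Specializing to $\omega(0)\equiv 0$ forces $\omega(t)\equiv 0$ on $[0,T]$, and the Poincar\'e lemma (global on $\mathbb{R}^n$, local on a compact $M$) furnishes the smooth potential $\phi$ with $u=\nabla\phi$. The delicate step is verifying the two cancellations, which rely crucially on the specific structure $X=|u|^{p-2}u$ and on pair symmetry of the Riemann tensor; the rest is standard Gr\"onwall bookkeeping.
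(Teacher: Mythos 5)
Your proof is correct and follows the same overall skeleton as the paper's: apply $d$ to the dualized momentum equation to kill the $d\log\rho$ term and obtain \eqref{eq of omega}, convert this into a transport equation for $\omega$ along $X=|u|^{p-2}u$, close with an $L^\gamma$ energy/Gr\"onwall estimate, and invoke the Poincar\'e lemma. Where you genuinely diverge is in how the commutator $d(\nabla_X u^*)-\nabla_X\omega$ is organized. The paper imports a wedge-product identity from the $p=2$ case ($d\nabla_u u^*-\nabla_u du^*=\sum_i du_i\wedge\nabla_{e_i}u^*$) and packages the lower-order remainder as a term $I$ written purely in terms of $\omega$ and $u$ (quadratic in $\omega$), which it then bounds by $C'|\omega|$ using $\|\omega\|_\infty\le C$. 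You instead do the Ricci-identity computation directly, check that the curvature contraction dies by pair symmetry against the symmetric tensor $X^k u^l$, and check that the symmetric leftover $(\nabla_iX^k)\nabla_ju^*_k-(\nabla_jX^k)\nabla_iu^*_k$ cancels; what remains is exactly the Lie-derivative equation $\partial_t\omega+\mathcal{L}_X\omega=-c^{-p}\omega$, whose zeroth-order coefficients are manifestly linear in $\omega$ with coefficients controlled by $\|\nabla X\|_\infty$. Your version is arguably cleaner: linearity in $\omega$ is structural (as it must be, since $d\mathcal{L}_X u^*=\mathcal{L}_X\omega$), so you do not need to feed $\|\omega\|_\infty$ back into the constant, only $\|u\|_\infty$ and $\|\nabla u\|_\infty$; the price is the two hand-checked cancellations, both of which you verify correctly. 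Two small points to make explicit: on $M=\mathbb{R}^n$ the integration by parts producing $\int(\nabla\cdot X)|\omega|^p\,dv$ needs the decay supplied by the $H^s$ regularity of the Kato solution, and on a compact manifold the Poincar\'e lemma only gives local exactness (which is all the theorem claims). The exponent $c^{-2}$ in the stated estimate is a typo for $c^{-p}$, consistent with what both you and the paper actually derive.
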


\begin{proof} 
Taking exterior differentiation on the both sides of the second equation in $\eqref{NDEC3}$, letting $\omega=du^*$ and using $d^2=0$, we obtain \eqref{eq of omega}. By the proof of Theorem 4.3 in \cite{LiLi3}, we have 
\begin{equation*}
    d\nabla_u u^*-\nabla_u du^*=\sum_{i=1}^n du_i\wedge \nabla_{e_i}u^*,
\end{equation*}
where $\left\{e_i\right\}_{i=1}^n$ is a local orthonormal frame. Notice that 
\begin{equation*}
d\omega=\sum_{i,j=1}^n\omega_{ij} ~e_i^*\wedge e_j^*=\sum_{i,j=1}^ne_i(u_j)e_i^*\wedge e_j^*.
\end{equation*}
Thus we have 
\begin{align*}
d\left(|u|^{p-2}\nabla_u u^*\right)
=&\sum_{i,j=1}^n u_j\left(\nabla_{e_i}|u|^{p-2}\right)e_i^*\wedge\nabla_{e_j}u^*+|u|^{p-2}\left(\nabla_u du^* +\sum_{i=1}^n du_i\wedge \nabla_{e_i}u^*\right)\\
=&|u|^{p-2}\nabla_u \omega+ |u|^{p-4} \sum_{i,j,k=1}^n \left[(p-2)M_{ij}+|u|^2\omega_{ij}\right]\omega_{jk} ~e_i^*\wedge e_k^*\\
=&|u|^{p-2}\nabla_u \omega+I,
\end{align*}
where 
\begin{equation*}
    M_{ij}=\sum_{l=1}^n \omega_{il} u_l u_j,\quad I=|u|^{p-4} \sum_{i,j,k=1}^n \left[(p-2)M_{ij}+|u|^2\omega_{ij}\right]\omega_{jk} ~e_i^*\wedge e_k^*\in \Lambda^2(T^*M).
\end{equation*}
Noticing that
\begin{equation*}
    |I|\leq \|u\|_{L^\infty(M,\mu)}^{p-4}\max\left\{(p-2)|M|,\|u\|_{L^\infty(M,\mu)}^2|\omega|\right\}|\omega|, 
\end{equation*}
where $|\cdot|=\|\cdot\|_{\rm HS}$, there exists a constant $C^\prime=C^\prime(\|u\|_{L^\infty(M,\mu)}, \|M\|_{L^\infty(M,\mu)}, \|\omega\|_{L^\infty(M,\mu)})$, such that 
\begin{equation*}
    |I|\leq C|\omega|.
\end{equation*}
Taking inner product with $|\omega|^{\gamma-2}\omega$ on the both sides of $\eqref{eq of omega}$, where $\gamma\geq 2$ is a constant, and integrating on $M$, we have
\begin{equation*}
\int_{M}\left\langle \frac{D}{dt}\omega, |\omega|^{\gamma-2}\omega\right\rangle dv+\int_M \left\langle I, |\omega|^{\gamma-2}\omega\right\rangle dv=-c^{-p}\int_M \left\langle \omega, |\omega|^{\gamma-2}\omega\right\rangle dv,
\end{equation*}
where $\frac{D}{dt}\omega=\partial_t\omega+|u|^{p-2}\nabla_u \omega$. That is 

\begin{equation*}
\begin{aligned}
\frac{1}{\gamma}\frac{d}{dt}\int_{M}\left|\omega\right|^\gamma dv=&-\int_M \left\langle I, |\omega|^{\gamma-2}\omega\right\rangle dv-c^{-p}\int_{M}\left|\omega\right|^\gamma dV\leq \left(C^\prime-c^{-p}\right)\int_{M}\left|\omega\right|^\gamma dv.
\end{aligned}
\end{equation*}
Then by the Gronwall's inequality, we have
\begin{equation*}
\|\omega(t)\|_{L^\gamma(M)}\leq e^{\left(C^\prime-c^{-p}\right)t}\|\omega(0)\|_{L^\gamma(M)}.
\end{equation*}
Moreover, if $\omega(0)=du^*(0)=dd\phi_0=0$, then $\omega(t)=du^*=0$ for all $t\in[0,T]$. By the Poincar\'e lemma, $u(t, \cdot)$ will keep the gradient form along $t>0$. 
\end{proof}

Now we state the local existence and uniqueness to the Cauchy problem for the Langevin deformation  $\eqref{Lan}$ for any fixed $c\in(0, +\infty)$.

\begin{theorem} Let $M$ be $\mathbb R^n$ or an $n$-dimensional compact Riemannian manifold. Let $c\in(0,+\infty)$ and $p\geq 2$. Suppose that $(\rho_0,\phi_0)\in \bigcap\limits_{s>\frac{n}{2}+1}H^s(M, \mathbb{R}^{+})\times \bigcap\limits_{s>\frac{n}{2}+2}H^s(M, \mathbb{R})$ with $\rho_0>0$. Then, there exists a constant $T > 0 $ such that the Cauchy problem for the Langevin deformation $\eqref{Lan}$ has a unique smooth solution $(\rho,\phi)$ in $C^1([0, T],C^\infty(M,  \mathbb{R}^{+})\times C^\infty(M, \mathbb{R}))$.
\end{theorem}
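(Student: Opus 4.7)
The plan is to reduce the Langevin deformation \eqref{Lan} to the compressible $p$-Euler system with damping \eqref{NDEC3} via the ansatz $u=\nabla\phi$, invoke the local existence already established in Corollary \ref{smooth rho-u}, and then reconstruct the scalar potential $\phi$ from the resulting velocity field.

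First, I set $u_0:=\nabla\phi_0$. The hypothesis $\phi_0\in\bigcap_{s>n/2+2}H^s(M)$ guarantees that $u_0\in\bigcap_{s>n/2+1}H^s(M;\mathbb{R}^n)$, while $\rho_0$ is smooth and strictly positive. Applying Corollary \ref{smooth rho-u} (with $V(\rho)=\rho\log\rho$) at each Sobolev level $s>n/2+1$ yields a time $T>0$ and a solution $(\rho,u)$ to the Cauchy problem for \eqref{NDEC3} with datum $(\rho_0,u_0)$ that is $C^\infty$ in $x$ and $C^1$ in $t$; a standard bootstrap together with the uniqueness part of Theorem \ref{Kato thm} produces a common existence time on which $(\rho,u)$ has spatial derivatives of every order.

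Since $u_0=\nabla\phi_0$, the dual $1$-form $u^*(0,\cdot)$ is closed, so Theorem \ref{close form} forces $\omega(t):=du^*(t,\cdot)\equiv 0$ throughout $[0,T]$. To avoid the $H^1(M)\neq 0$ obstruction when $M$ is a compact manifold with nontrivial topology, I do not appeal to the Poincar\'e lemma but instead define $\phi(t,x)$ as the unique solution of the pointwise linear ODE
\begin{equation*}
c^p\partial_t\phi+\phi=-\log\rho-1-\tfrac{c^p}{p}|u|^p,\qquad \phi(0,x)=\phi_0(x),
\end{equation*}
which is explicitly smooth in $x$ and $C^1$ in $t$. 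Taking the spatial gradient yields $c^p\partial_t\nabla\phi+\nabla\phi=-\nabla\log\rho-\tfrac{c^p}{p}\nabla|u|^p$. On the other hand, \eqref{NDEC3} reads $c^p\partial_t u+u=-\nabla\log\rho-c^p|u|^{p-2}\nabla_u u$, and the closedness $\omega=0$ gives the pointwise identity $(\nabla_u u)_i=\tfrac12\nabla_i|u|^2+u^j\omega_{ji}=\tfrac12\nabla_i|u|^2$, so $|u|^{p-2}\nabla_u u=\tfrac1p\nabla|u|^p$. Subtracting the two equations, $w:=u-\nabla\phi$ obeys the linear ODE $c^p\partial_t w+w=0$ with $w(0)=0$, whence $w\equiv 0$ and $u=\nabla\phi$ on $[0,T]$. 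Substituting this identification into the first equation of \eqref{NDEC3} recovers \eqref{Lan} exactly.

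Uniqueness is immediate: if $(\rho_i,\phi_i)$, $i=1,2$, are two smooth solutions sharing the same initial data, then $(\rho_i,\nabla\phi_i)$ both solve \eqref{NDEC3} with identical datum and coincide by Corollary \ref{smooth rho-u}; the difference $\phi_1-\phi_2$ then satisfies a scalar homogeneous linear ODE in $t$ with zero initial value, forcing $\phi_1\equiv\phi_2$. The central subtlety of this argument lies in reconciling a gradient-field reconstruction with the possibly nontrivial topology of $M$: without Theorem \ref{close form} one could not guarantee that $u$ stays a gradient in time, and without the ODE construction of $\phi$ above one would only recover a scalar potential locally on compact $M$ and lose the crucial zeroth-order term $\phi$ on the right-hand side of \eqref{Lan}.
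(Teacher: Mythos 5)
Your proof is correct and follows essentially the same route as the paper: reduce to the compressible $p$-Euler system with damping via $u=\nabla\phi$, invoke Corollary \ref{smooth rho-u} and the closedness-propagation result of Theorem \ref{close form}, and recover $\phi$ by solving the pointwise linear ODE $c^p\partial_t\phi+\phi=-\log\rho-1-\tfrac{c^p}{p}|u|^p$ (the paper simply writes down the Duhamel formula for this same ODE). Your additional verification that $w=u-\nabla\phi$ satisfies a homogeneous linear ODE --- using $\omega=0$ to write $|u|^{p-2}\nabla_u u=\tfrac1p\nabla|u|^p$ --- is a detail the paper leaves implicit, and it correctly sidesteps the global-exactness issue on manifolds with nontrivial first cohomology.
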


\begin{proof}
The proof is similar to \cite{LiLi3} for $p=2$. Since we obtained the local existence and uniqueness of smooth solution to the compressible $p$-Euler equation with damping in Corollary \ref{smooth rho-u}, we can construct
\begin{equation*}
\phi(t,x)=e^{-\frac{t}{c^p}}\phi_0(x)-e^{-\frac{t}{c^p}}\int_0^t e^{-\frac{s}{c^p}}\left(\frac{V^\prime(\rho)(s,x)}{c^p}+\frac{1}{p}|u(s,x)|^p\right)ds.
\end{equation*}
Combining with Theorem $\ref{close form}$, we complete the proof. \end{proof}

\begin{remark}
The local existence and uniqueness result to the Cauchy problem for the Langevin
deformation $\eqref{Lan}$ can be extended to complete
Riemannian manifolds on which the Bernstein-Rellich-Kondrakov-Morrey continuity embedding theorem \eqref{KEmb} holds. When $p=2$,  the global existence and uniqueness of smooth solution with small initial data to the compressible Euler equations with damping on $\mathbb R^n$ were established by Wang and Yang \cite{WY}. See also Sideris, Thomases and Wang \cite{STW} for alternative approach. Assuming that $M$ is a compact Riemannian manifold, S. Li and the second named author \cite{LiLi4} proved that if the initial datum has small Sobolev norm, then the Cauchy problem for the Langevin deformation $\eqref{LLflow}$ on $TP_2(M)$ has a global unique solution in $H^s$ with $s> {n\over 2}+1$ for any fixed $c\in(0, +\infty)$. Convergence results for both $c\to 0$ and $c\to\infty$ were proved in \cite{LiLi4}. See also \cite{LLL24} for the isentropic case. 
When $p\neq 2$, it remains an interesting question whether we can prove the global well-posedness, regularity and the convergence of the system \eqref{NDEC3} on complete Riemannian manifolds with suitable geometric condition. We will study this problem in the future.
\end{remark}

\section{Lagrangian and Hamiltonian for the Langevin deformation }\label{LaHa}

In this section, we prove some variational formulas 
for the Lagrangian and Hamiltonian of the Langevin deformation \eqref{NDEC1} with $\mathcal{V}(\rho)=\mathrm{Ent}(\rho)$ and $\mathcal{V}(\rho)=-\mathrm{Ent}(\rho)$ respectively, which have their own interests. Then we prove the second main results of this paper, i.e., Theorem \ref{Ent-Riccati}.

\begin{theorem}\label{Lagrange}
 Let $(M, g)$ be a complete Riemannian manifold with bounded geometry condition,  $p>1$ and $q=\frac{p}{p-1}$. For any $c\ge0$,  let $(\rho(t),\phi(t)), t\in [0, T])$ be a smooth solution to \eqref{Lan}. Define the Lagrangian $L_c(\rho(t),\phi(t))$ as follows
\begin{equation*}
L_c(\rho(t),\phi(t)):=\frac{c^p}{q}\int_M|\nabla\phi(t)|^p\rho(t)\, dv-\int_M\rho(t)\log\rho(t)\, dv,\ \ \forall t\in [0, T].
\end{equation*}
Then, for all $t \in [0, T]$, we have
\begin{eqnarray}
\frac{d}{dt}L_c(\rho(t),\phi(t))&=&-p\int_M|\nabla\phi(t)|^{p-2}\langle\nabla\phi(t),\nabla\rho(t)\rangle\,dv-(p-1)\int_M|\nabla\phi(t)|^p\rho(t)\,dv,\label{dt_L}\\
\frac{d^2}{dt^2}L_c(\rho(t),\phi(t))&=&-p\int_M|\nabla \phi|^{2p-4}(|\nabla^2\phi|^2_A+{\rm Ric}(\nabla \phi,\nabla \phi))\rho\,dv \notag\\
&&+\frac p{c^p}\int_M|\nabla \phi|^{p-2}|\nabla\phi+\nabla\log\rho|^2_A\rho\,dv.\label{dt2_L}
\end{eqnarray}
Define the Hamiltonian $H_c(\rho(t),\phi(t))$ as follows
\begin{equation*}
H_c(\rho(t),\phi(t)):=\frac{c^p}{p}\int_M|\nabla\phi(t)|^p\rho(t)\, dv+\int_M\rho(t)\log\rho(t)\, dv,\ \ \forall t\in [0, T].  
\end{equation*}
Then, for all $t \in [0, T]$, we have
\begin{eqnarray}
\frac{d}{dt}H_c(\rho(t),\phi(t))&=&-\int_M|\nabla\phi(t)|^p\rho(t)\,dv\leq 0, \label{dt_H}\\
\frac{d^2}{dt^2}H_c(\rho(t),\phi(t))&=&-\frac{p}{c^p}\int_M\Delta_p\phi~\rho\,dv+\frac{p}{c^p}\int_M|\nabla\phi|^p\rho\,dv.\label{dt2_H}
\end{eqnarray}
\end{theorem}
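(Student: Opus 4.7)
The plan is to reduce the four formulas to a single new computation: the time derivative of $I_2(t):=\int_M|\nabla\phi|^p\rho\,dv$ along the Langevin system, because $L_c$ and $H_c$ are explicit affine combinations of $I_2$ and the Boltzmann entropy, and $\frac{d}{dt}{\rm Ent}(\rho)=\int_M|\nabla\phi|^{p-2}\langle\nabla\phi,\nabla\rho\rangle\,dv=:I_1(t)$ is already furnished by Proposition \ref{EDF} (its proof uses only the continuity equation, which is identical in \eqref{NDE0} and \eqref{Lan}). For $\frac{d}{dt}I_2$, I would differentiate under the integral, substitute $\nabla\partial_t\phi=-\frac1p\nabla|\nabla\phi|^p-\frac1{c^p}(\nabla\phi+\nabla\log\rho)$ from \eqref{Lan} into the cross term $p\int|\nabla\phi|^{p-2}\langle\nabla\phi,\nabla\partial_t\phi\rangle\rho\,dv$, and integrate the $\partial_t\rho$ term by parts. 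The pure Hamilton--Jacobi contribution cancels against the continuity-equation term and only the damping survives, leaving
\begin{equation*}
\frac{d}{dt}I_2=-\frac{p}{c^p}I_2-\frac{p}{c^p}I_1.
\end{equation*}
Plugging this into $\frac{d}{dt}L_c=\frac{c^p}{q}\frac{d}{dt}I_2-I_1$ and using $p/q=p-1$ produces \eqref{dt_L}; plugging it into $\frac{d}{dt}H_c=\frac{c^p}{p}\frac{d}{dt}I_2+I_1$ enjoys a perfect cancellation of $I_1$ and collapses to \eqref{dt_H}, whose non-positivity is immediate.

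For the second derivatives, the Hamiltonian case is essentially free: $\frac{d^2}{dt^2}H_c=-\frac{d}{dt}I_2=\frac{p}{c^p}(I_1+I_2)$, and converting $I_1=-\int_M\rho\,\Delta_p\phi\,dv$ by one integration by parts yields \eqref{dt2_H}. For the Lagrangian, I would differentiate \eqref{dt_L} and I need $\frac{d}{dt}I_1$. Here I follow the template of the proof of \eqref{HJEnt2} in Proposition \ref{EDF} almost verbatim: write $\frac{d}{dt}I_1=\int|\nabla\phi|^{p-2}\langle\nabla\partial_t\phi,\nabla\rho\rangle_A\,dv+\int|\nabla\phi|^{p-2}\langle\nabla\phi,\nabla\partial_t\rho\rangle\,dv$, use the continuity equation for the second term, and invoke the $p$-Bochner formula \eqref{p-bochner}. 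The $-\frac1p\nabla|\nabla\phi|^p$ part of $\nabla\partial_t\phi$ combines with the Bochner term exactly as in the geodesic case to produce the curvature-Hessian integral $\int|\nabla\phi|^{2p-4}(|\nabla^2\phi|_A^2+{\rm Ric}(\nabla\phi,\nabla\phi))\rho\,dv$; the extra damping piece contributes the correction $-\frac1{c^p}\int|\nabla\phi|^{p-2}\langle\nabla\phi+\nabla\log\rho,\nabla\log\rho\rangle_A\rho\,dv$. Assembling $-p\frac{d}{dt}I_1-(p-1)\frac{d}{dt}I_2$ and using the algebraic identities $A\nabla\phi=(p-1)\nabla\phi$ and $\langle\nabla\phi,\nabla\log\rho\rangle_A=(p-1)\langle\nabla\phi,\nabla\log\rho\rangle$ to expand $|\nabla\phi+\nabla\log\rho|_A^2=(p-1)|\nabla\phi|^2+2(p-1)\langle\nabla\phi,\nabla\log\rho\rangle+|\nabla\log\rho|_A^2$ shows that all the damping pieces bundle into the single term $\frac{p}{c^p}\int|\nabla\phi|^{p-2}|\nabla\phi+\nabla\log\rho|_A^2\rho\,dv$, producing \eqref{dt2_L}.

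The main technical obstacle, as in Proposition \ref{EDF}, is justifying every integration by parts on a noncompact manifold. I would reuse the cutoff functions $\eta_k$ employed there, verify uniform $L^1$ integrability of every integrand under the growth condition stipulated in Proposition \ref{EDF}---augmented by $\int_M|\nabla\log\rho|^p\rho\,dv<\infty$ and $\int_M|\nabla\phi|^{p-2}|\nabla\log\rho|_A^2\rho\,dv<\infty$ to accommodate the new damping terms---and pass to the limit via Lebesgue's dominated convergence theorem, using the bounded geometry assumption through $|{\rm Ric}|\le C$.
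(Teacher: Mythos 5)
Your proposal is correct and follows essentially the same route as the paper: the paper likewise reduces everything to the two derivatives $\frac{d}{dt}{\rm Ent}(\rho)=\int_M|\nabla\phi|^{p-2}\langle\nabla\phi,\nabla\rho\rangle\,dv$ and $\frac{d}{dt}\int_M|\nabla\phi|^p\rho\,dv=-\frac{p}{c^p}\int_M|\nabla\phi|^{p-2}\langle\nabla\phi,\nabla\rho\rangle\,dv-\frac{p}{c^p}\int_M|\nabla\phi|^p\rho\,dv$, then obtains \eqref{dt2_L} by differentiating the first quantity via the $p$-Bochner formula exactly as you describe. Your cutoff/dominated-convergence justification of the integrations by parts is a detail the paper delegates to Proposition \ref{EDF} but does not repeat here.
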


\proof By \eqref{Lan}, a direct calculation  implies that
\begin{equation}\label{dtH}
\frac{d}{dt}\left(\int_M\rho\log\rho\,dv\right)=\int_M\partial_t\rho(1+\log\rho)\,dv
=\int_M|\nabla\phi|^{p-2}\langle\nabla\phi,\nabla\rho\rangle\,dv=-\int_M(\Delta_p\phi)\rho\,dv,
\end{equation}
and
\begin{align}\label{dtH2}
\frac{d}{dt}\left(\int_M|\nabla\phi|^p\rho\,dv\right)
=&-\frac{p}{c^p}\int_M|\nabla\phi|^{p-2}\langle\nabla\phi,\nabla\rho\rangle\,dv-\frac{p}{c^p}\int_M|\nabla\phi|^p\rho\,dv.
\end{align}
Combining \eqref{dtH} with \eqref{dtH2}, we derive \eqref{dt_L}, \eqref{dt_H} and \eqref{dt2_H}. 

Applying the $p$-Bochner formula \eqref{p-bochner} and \eqref{HJEnt2}, we have
\begin{align}\label{dtH3}
&\frac{d}{dt}\left(\int_M|\nabla\phi|^{p-2}\langle\nabla\phi,\nabla\rho\rangle\,dv\right)\notag\\
=&\int_M|\nabla \phi|^{2p-4}(|\nabla^2 \phi|^2_A+{\rm Ric}(\nabla \phi,\nabla \phi))\rho\,dv-\frac1{c^p}\int_M|\nabla\phi|^{p-2}\left(|\nabla\log\rho|^2_A+\langle\nabla\phi,\nabla\log\rho\rangle_A\right)\rho\,dv.
\end{align}
Combining \eqref{dtH2} with \eqref{dtH3}, we have \eqref{dt2_L}. 
\endproof

Similarly, we can prove the following variational formula.
\begin{theorem}\label{Hamilton}
Under the same condition as Theorem \ref{Lagrange}, let $(\rho(t),\phi(t)), t\in [0, T])$ be a smooth solution to the following equations 
 \begin{equation} \label{NDEC}
\left\{\begin{aligned}
    &\frac{\partial\rho}{\partial t}+\nabla\cdot\left(\rho|\nabla \phi|^{p-2}\nabla \phi\right)=0, \\
   &c^p\left(\frac{\partial \phi}{\partial t}+\frac1p|\nabla \phi|^p\right)=-\phi+\log\rho+1.
\end{aligned}
\right.
\end{equation}
Define the  Hamiltonian $H_c(\rho(t),\phi(t))$  as follows
\begin{equation*}
H_c(\rho(t),\phi(t)):=\frac{c^p}{q}\int_M|\nabla\phi(t)|^p\rho(t)\, dv+\int_M\rho(t)\log\rho(t)\, dv,\ \ \forall t\in [0, T].
\end{equation*}
Then, for all $t\in [0, T]$, we have
\begin{equation*}
\frac{d}{dt}H_c(\rho(t),\phi(t))=p\int_M|\nabla\phi|^{p-2}\langle\nabla\phi,\nabla\rho\rangle\,dv-(p-1)\int_M|\nabla\phi|^p\rho\,dv,
\end{equation*}
and
\begin{equation*}\label{dt2H}
\frac{d^2}{dt^2}H_c(\rho(t),\phi(t))=p\int_M|\nabla \phi|^{2p-4}\left(|\nabla^2\phi|^2_A+{\rm Ric}(\nabla \phi,\nabla \phi)+\frac 1{c^p}|\nabla\phi|^{2-p}|\nabla\phi-\nabla\log\rho|^2_A\right)\rho\,dv.
\end{equation*}
In particular, if  ${\rm Ric}\ge0$, then $H_c(\rho,\phi)$ is convex along the Langevin deformation of flows $(\rho,\phi)$ defined by \eqref{NDEC}.
\end{theorem}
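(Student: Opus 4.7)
The plan is to mirror the proof of Theorem~\ref{Lagrange} line by line, noting that the only substantive change between \eqref{Lan} and \eqref{NDEC} is the sign flip of $\log\rho$ in the Hamilton-Jacobi equation. The continuity equation is unchanged, so the identity
\[
\frac{d}{dt}\int_M\rho\log\rho\,dv=\int_M|\nabla\phi|^{p-2}\langle\nabla\phi,\nabla\rho\rangle\,dv
\]
from \eqref{dtH} continues to hold. For the kinetic part, I would substitute the new Hamilton-Jacobi equation into the derivation of \eqref{dtH2}; the sign flip on $\log\rho$ turns that identity into
\[
\frac{d}{dt}\int_M|\nabla\phi|^p\rho\,dv=\frac{p}{c^p}\int_M|\nabla\phi|^{p-2}\langle\nabla\phi,\nabla\rho\rangle\,dv-\frac{p}{c^p}\int_M|\nabla\phi|^p\rho\,dv.
\]
Assembling the two with weights $c^p/q$ and $1$ respectively, and using $p/q=p-1$, yields the claimed first-derivative formula for $H_c$.

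\textbf{Second derivative.} Differentiating the first-derivative expression gives
\[
\frac{d^2}{dt^2}H_c=p\,\frac{d}{dt}J-(p-1)\,\frac{d}{dt}\!\int_M|\nabla\phi|^p\rho\,dv,\qquad J:=\int_M|\nabla\phi|^{p-2}\langle\nabla\phi,\nabla\rho\rangle\,dv.
\]
To compute $\frac{d}{dt}J$, I would expand $\partial_t(|\nabla\phi|^{p-2}\nabla\phi)=|\nabla\phi|^{p-2}A(\nabla\partial_t\phi)$, substitute the new Hamilton-Jacobi equation, apply the $p$-Bochner formula \eqref{p-bochner}, and integrate by parts exactly as in the derivation of \eqref{dtH3}. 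The sign flip of $\log\rho$ converts the $-c^{-p}(|\nabla\log\rho|^2_A+\langle\nabla\phi,\nabla\log\rho\rangle_A)$ contribution appearing in \eqref{dtH3} into $+c^{-p}(|\nabla\log\rho|^2_A-\langle\nabla\phi,\nabla\log\rho\rangle_A)$, while the Hessian-Ricci piece is unchanged. Using the pointwise identities $\langle\nabla\phi,\nabla\phi\rangle_A=(p-1)|\nabla\phi|^2$ and $\langle\nabla\phi,\nabla\log\rho\rangle_A=(p-1)\langle\nabla\phi,\nabla\log\rho\rangle$, all $\log\rho$-dependent cross terms coming from $p\,\frac{d}{dt}J$ and from $-(p-1)\,\frac{d}{dt}\!\int|\nabla\phi|^p\rho\,dv$ assemble into the perfect square $\frac{p}{c^p}|\nabla\phi|^{p-2}|\nabla\phi-\nabla\log\rho|^2_A$. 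Rewriting $|\nabla\phi|^{p-2}=|\nabla\phi|^{2p-4}\cdot|\nabla\phi|^{2-p}$ then gives the stated formula for $\frac{d^2}{dt^2}H_c$. The convexity assertion follows at once, since under $\mathrm{Ric}\geq 0$ every term in the resulting integrand is pointwise nonnegative.

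\textbf{Main obstacle.} The computational core is little more than careful sign-bookkeeping once the scheme of Theorem~\ref{Lagrange} is in hand. The genuine technical issue is justifying every integration by parts on the non-compact manifold $M$ and ruling out boundary contributions at infinity. This is exactly the role played by the growth hypotheses in Proposition~\ref{EDF}, and I would replicate the cut-off/dominated-convergence approximation used there, imposing the analogous growth/integrability conditions on the solution $(\rho,\phi)$ of \eqref{NDEC}. Once that justification is in place, the remainder reduces to the algebraic manipulations described above.
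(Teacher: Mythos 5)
Your proposal is correct and is exactly the argument the paper intends: the paper gives no separate proof of Theorem \ref{Hamilton}, stating only that it follows ``similarly'' to Theorem \ref{Lagrange}, and your sign-flipped rerun of \eqref{dtH}, \eqref{dtH2}, \eqref{dtH3} together with the identities $\langle\nabla\phi,\nabla\phi\rangle_A=(p-1)|\nabla\phi|^2$ and $\langle\nabla\phi,\nabla\log\rho\rangle_A=(p-1)\langle\nabla\phi,\nabla\log\rho\rangle$ does assemble the cross terms into the perfect square $\frac{p}{c^p}|\nabla\phi|^{p-2}|\nabla\phi-\nabla\log\rho|^2_A$, reproducing both displayed formulas. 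Your remark on justifying the integrations by parts via the cut-off and growth conditions of Proposition \ref{EDF} matches the paper's treatment as well.
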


\begin{corollary} Let $(M,g)$ be a complete Riemannian manifold with bounded geometry condition. Then
\begin{itemize}
\item When $c=0$, the Langevin deformation \eqref{Lan} reduces to the gradient flow of the Boltzmann entropy, i.e., a $p$-heat equation \eqref{BPHE}. By an analogous calculation in \eqref{dt2H}, we have
      $$
  \frac{d^2}{dt^2}{\rm Ent}(\rho,\phi)=p\int_M|\nabla \log\rho|^{2p-4}(|\nabla^2 \log\rho|^2_A+{\rm Ric}(\nabla \log\rho,\nabla \log\rho))\rho\,dv.
      $$
  \item When $c=\infty$, the Langevin deformation \eqref{Lan} becomes the $L^q$-geodesic flow on the $L^q$-Wasserstein space. In this case, by \eqref{HJEnt2}, we have
$$
  \frac{d^2}{dt^2}{\rm Ent}(\rho,\phi)=\int_M|\nabla \phi|^{2p-4}(|\nabla^2 \phi|^2_A+{\rm Ric}(\nabla \phi,\nabla \phi))\rho\,dv.
$$
  \end{itemize}
\end{corollary}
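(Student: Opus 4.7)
The corollary concerns the two extremal limits $c=\infty$ and $c=0$ of the Langevin deformation \eqref{Lan}, and both parts follow from variational identities already established in the paper.

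For the $c=\infty$ case, dividing the second equation in \eqref{Lan} by $c^p$ and sending $c\to\infty$ yields the Hamilton-Jacobi equation $\partial_t\phi+\tfrac{1}{p}|\nabla\phi|^p=0$, so $(\rho,\phi)$ solves the $L^q$-geodesic system \eqref{NDE0}. The claimed identity is then exactly the formula \eqref{HJEnt2} proved in Proposition \ref{EDF}; no further argument is required.

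For the $c=0$ case, setting $c=0$ in the second equation of \eqref{Lan} forces the algebraic relation $\phi=-\log\rho-1$, so $\nabla\phi=-\nabla\log\rho$ and $|\nabla\phi|=|\nabla\log\rho|$. Substituting into the continuity equation gives $\partial_t\rho=\nabla\cdot(\rho|\nabla\log\rho|^{p-2}\nabla\log\rho)$, which is the $p$-Laplace heat equation \eqref{BPHE} via $u=\rho^{1/(p-1)}$. To prove the stated formula, I would repeat the derivation of \eqref{dt2H} from Theorem \ref{Hamilton} in this setting: starting from $\frac{d}{dt}{\rm Ent}(\rho)=-\int_M(\Delta_p\phi)\rho\,dv$ in \eqref{dtH}, differentiate once more in time, substitute $\partial_t\phi=-\partial_t\log\rho$ (instead of the Hamilton-Jacobi relation used in the geodesic case), invoke the $p$-Bochner formula \eqref{p-bochner}, and integrate by parts. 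Using $\nabla\phi=-\nabla\log\rho$ and the pointwise identity $\langle\nabla\log\rho,\nabla\log\rho\rangle_A=(p-1)|\nabla\log\rho|^2$ (which comes from the definition \eqref{Atensor} of $A$ when $\nabla\phi$ and $\nabla\log\rho$ are parallel), the extra cross-term arising from the failure of the Hamilton-Jacobi equation combines with the main Bochner contribution to produce the overall factor of $p$ on the right-hand side.

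Equivalently, one can view the formula as the formal $c\to 0$ limit of \eqref{dt2H} applied to the sign-flipped companion system \eqref{NDEC}: at $c=0$ one has $\phi=\log\rho+1$, $\nabla\phi=\nabla\log\rho$, $H_0={\rm Ent}(\rho)$, and the singular prefactor $c^{-p}|\nabla\phi-\nabla\log\rho|^2_A$ in \eqref{dt2H} vanishes because a first-order expansion in $c$ gives $\phi-\log\rho-1=O(c^p)$ and hence $|\nabla\phi-\nabla\log\rho|^2=O(c^{2p})$. Since the right-hand side of the claimed identity depends only on even powers of $\nabla\log\rho$, the two systems \eqref{Lan} and \eqref{NDEC} produce the same limit. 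The main technical obstacle is handling the degeneracy of the $p$-Bochner formula where $\nabla\phi=0$, which I would treat by the $\varepsilon$-regularization $\mathcal{L}_\varepsilon$ introduced in Section 3 and then passing to the limit $\varepsilon\to0$; the resulting identity coincides with the $W$-entropy formula for the $p$-Laplace heat equation proved by Kotschwar-Ni \cite{KoNi}.
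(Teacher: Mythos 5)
Your proposal is correct and follows essentially the same route as the paper, which proves the $c=\infty$ case by direct appeal to \eqref{HJEnt2} and the $c=0$ case by the "analogous calculation" to \eqref{dt2H} that you carry out (the substitution $\phi=-\log\rho-1$, the $p$-Bochner formula \eqref{p-bochner}, and the identity $\langle Y,\nabla\log\rho\rangle_A=(p-1)\langle Y,\nabla\log\rho\rangle$ making the cross-terms cancel so that the factor $p$ from \eqref{p-bochner} survives). Your alternative reading as a formal $c\to0$ limit of \eqref{dt2H} is consistent with the paper's phrasing and with the sign-invariance of the right-hand side.
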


\proof[\bf Proof of Theorem \ref{Ent-Riccati}] By a direct calculation,  we have
\begin{equation}\label{dtE}
\frac{d}{dt}{\rm Ent}(\rho(t))=\int_M\partial_t\rho(1+\log\rho)\,dv
=\int_M|\nabla\phi|^{p-2}\langle\nabla\phi,\nabla\rho\rangle\,dv,
\end{equation}
and{}
\begin{equation}\label{dt2E}
\begin{aligned}
\frac{d^2}{dt^2}{\rm Ent}(\rho(t))=&\int_M|\nabla \phi|^{2p-4}(|\nabla^2 \phi|^2_A+{\rm Ric}(\nabla \phi,\nabla \phi))\rho\,dv-\frac1{c^p}\int_M|\nabla\phi|^{p-2}|\nabla\log\rho|^2_A\rho dv\\
&-\frac1{c^p}\int_M|\nabla\phi|^{p-2}\langle\nabla\phi,\nabla \rho\rangle_A\,dv.
\end{aligned}
\end{equation}
Thus, we obtain \eqref{dtH4}, that is
\begin{align*}
&\frac{d^2}{dt^2}{\rm Ent}(\rho)+\frac{p-1}{c^p}\frac{d}{dt}{\rm Ent}(\rho)+\frac1{c^p}\int_M|\nabla\phi|^{p-2}|\nabla\log\rho|^2_A\rho\,dv=\int_M|\nabla \phi|^{2p-4}\left(|\nabla^2 \phi|^2_A+{\rm Ric}(\nabla \phi,\nabla \phi)\right)\rho\,dv.
\end{align*}
 By the identity
$
{\rm tr}_A(|\nabla \phi|^{p-2}\nabla^2\phi)=\Delta_p\phi
$, we have
\begin{align}\label{dtH5}
\int_M\Big||\nabla \phi|^{p-2}\nabla^2\phi-\alpha(t)a \Big|^2_A\rho\,dv
=\int_M|\nabla \phi|^{2p-4}|\nabla^2\phi |^2_A\rho\,dv-2\alpha(t)\int_M\Delta_p\phi\rho\,dv+n\alpha(t)^2.
\end{align}
Substituting \eqref{dtH5} into \eqref{dtH4}, we have  
\begin{align}\label{dtH6}
&\frac{d^2}{dt^2}{\rm Ent}(\rho)+\left(2\alpha(t)+\frac{p-1}{c^p}\right)\frac{d}{dt}{\rm Ent}(\rho)+\frac1{c^p}\int_M|\nabla\phi|^{p-2}|\nabla\log\rho|^2_A\rho\,dv+n\alpha^2(t)\notag\\
=&\int_M\left[\Big||\nabla \phi|^{p-2}\nabla^2\phi-\alpha(t)a \Big|^2_A+|\nabla \phi|^{2p-4}{\rm Ric}(\nabla \phi,\nabla \phi)\right]\rho\,dv.
\end{align}
By the definition of $W_{c,n,p}$ \eqref{Wcn},  and \eqref{dtH6}, we obtain 
\begin{align}\label{dtwcn2}
\frac1{\eta(t)}\frac{d}{dt}W_{c,n,p}(\rho(t),t)=&\frac{d^2}{dt^2}{\rm Ent}(\rho(t))+\frac{1+\dot{\eta}(t)}{\eta(t)}\frac{d}{dt}{\rm Ent}(\rho(t))+n\left[\left(\frac{\dot{w}(t)}{w(t)}\right)'+\frac{1+\dot{\eta}(t)}{\eta(t)}\frac{\dot{w}(t)}{w(t)}\right]\notag\\
=&\frac{d^2}{dt^2}{\rm Ent}(\rho(t))+\left(2\alpha(t)+\frac{p-1}{c^p}\right)\frac{d}{dt}{\rm Ent}(\rho(t))+n\left[\dot{\alpha}(t)+2\alpha^2(t)+\frac{p-1}{c^p}\alpha(t)\right]\notag\\
=&\int_M\left[\Big||\nabla \phi|^{p-2}\nabla^2\phi-\alpha(t)a \Big|^2_A+|\nabla \phi|^{2p-4}{\rm Ric}(\nabla \phi,\nabla \phi)\right]\rho\,dv\notag\\
&-\frac1{c^p}\int_M|\nabla\phi|^{p-2}|\nabla\log\rho|^2_A\rho\,dv+n\left[\dot{\alpha}(t)+\alpha^2(t)+\frac{p-1}{c^p}\alpha(t)\right].
\end{align}
Combining \eqref{dtwcn2}, \eqref{alphaeq} with the definition of the relative Fisher information \eqref{Icn}, we obtain the $W$-entropy-information formula \eqref{wcnentropy}.
\endproof

Similarly to the proof of Theorem \ref{CEntForm} and Theorem \ref{Ent-Riccati}, we can extend the $W$-entropy-information formula \eqref{wcnentropy} to smooth solutions $(\rho, \phi)$ of the Langevin deformation equations with suitable growth conditions on complete Riemannian manifolds $M$ with bounded geometry condition. In view of this, we have the following rigidity theorem.
\begin{theorem}\label{RT_WIE}
Let $M$ be an $n$-dimensional complete Riemannian manifold with bounded geometry condition and with $\mathrm{Ric}\geq 0$. Suppose that $(\rho,\phi)$ is a smooth solution of the Langevin deformation \eqref{Lan} and satisfies suitable growth condition as in Proposition \ref{EDF}. Then the $W$-entropy-information quantity in \eqref{WEI} vanishes at some $t=t_0 >0$, i.e.,
\begin{equation*}
\frac1{\eta(t)}\frac{d}{dt}{W}_{c,n,p}(\rho(t),t)+\frac1{c^p}I_{c,n,p}(\rho(t),\phi(t))=0, 
\end{equation*}
at some $t = t_0 > 0$, if and only if $M$ is isometric to $\mathbb R^n$ and $(\rho,\phi)=(\rho_{c,n,p},\phi_{c,n,p})$ is the special solution to the Langevin deformation of flows on $TP_q(\mathbb R^n)$ as given in \eqref{RM for L flow1}.
\end{theorem}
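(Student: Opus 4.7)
The ``if'' direction is a direct verification: the calculation displayed just before the theorem shows that the special solution $(\rho_{c,n,p}, \phi_{c,n,p})$ on $\mathbb{R}^n$ realizes the vanishing of $\frac{1}{\eta}\frac{d}{dt}W_{c,n,p} + \frac{1}{c^p}I_{c,n,p}$ identically. The content lies in the ``only if'' direction, and the argument parallels the rigidity part of the proof of Theorem \ref{CEntForm}, augmented with the Hamilton--Jacobi-type second equation of the Langevin deformation to recover $\rho$.

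First I would apply the $W$-entropy-information identity \eqref{wcnentropy} (extended to complete Riemannian manifolds with bounded geometry and to $(\rho, \phi)$ satisfying the growth condition in Proposition \ref{EDF}, as asserted in the paragraph preceding the theorem). Under $\mathrm{Ric} \geq 0$, its right-hand side is the integral of two pointwise non-negative terms, so the assumed vanishing of the left-hand side at $t = t_0$ forces both integrands to vanish $\rho(t_0,\cdot)\,dv$-almost everywhere. On the support of $\rho(t_0,\cdot)$ this yields
\begin{equation*}
\mathrm{Ric}(\nabla \phi, \nabla \phi) = 0 \qquad \text{and} \qquad |\nabla \phi|^{p-2}\nabla_i \nabla_j \phi = \alpha(t_0)\, a_{ij},
\end{equation*}
the second equation being equivalent, away from $\{\nabla \phi = 0\}$, to
\begin{equation*}
\nabla_i \nabla_j \phi = \frac{\alpha(t_0)}{|\nabla \phi|^{p-2}}\Bigl(g_{ij} + (q-2)\frac{\nabla_i \phi\, \nabla_j \phi}{|\nabla \phi|^2}\Bigr).
\end{equation*}
Invoking Theorem 6.19 of Kotschwar--Ni \cite{KoNi}, exactly as in the rigidity step of Theorem \ref{CEntForm}, I conclude that $(M, g)$ is isometric to $\mathbb{R}^n$ and, after an ambient translation,
\begin{equation*}
\phi(t_0, x) = \frac{\alpha(t_0)^{q-1}}{q}\|x\|^q + \beta_0
\end{equation*}
for some constant $\beta_0$, matching $\phi_{c,n,p}(t_0, \cdot)$ in \eqref{RM for L flow1} with $\beta(t_0) = \beta_0$.

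It then remains to identify $\rho(t_0, \cdot)$ with $\rho_{c,n,p}(t_0, \cdot)$ and propagate the match in time. I would read $\rho(t_0, \cdot)$ off from the Hamilton--Jacobi part of \eqref{Lan}, rearranged as
\begin{equation*}
\log \rho(t_0, x) = -\phi(t_0, x) - 1 - c^p\Bigl(\partial_t \phi(t_0, x) + \tfrac{1}{p}|\nabla \phi(t_0, x)|^p\Bigr).
\end{equation*}
Substituting the rigidity ansatz $\phi(t, x) = \tfrac{\alpha(t)^{q-1}}{q}\|x\|^q + \beta(t)$ into \eqref{Lan} and matching coefficients of powers of $\|x\|$ produces precisely the ODEs \eqref{alphaeq}--\eqref{betaeq}, whose consistency via $\alpha = \dot w / w$ yields \eqref{pode} and fixes the parameter $w(t_0)$; imposing the mass normalization $\int_{\mathbb{R}^n} \rho(t_0, x)\,dx = 1$ then pins down the constant $c_{n,p}$ in \eqref{cpn}, giving $\rho(t_0, \cdot) = \rho_{c,n,p}(t_0, \cdot)$. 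Finally, the local uniqueness of the Cauchy problem for \eqref{Lan} established in Section \ref{LDF} propagates the coincidence from $t_0$ to the whole interval, yielding $(\rho, \phi) \equiv (\rho_{c,n,p}, \phi_{c,n,p})$. The main obstacle is the $\rho$-recovery step: the Hessian equation at $t_0$ controls only the spatial profile of $\phi$, not $\partial_t \phi$, so identifying $\partial_t \phi(t_0, \cdot)$ with the values predicted by the rigidity model and verifying that \eqref{alphaeq}--\eqref{betaeq} are exactly the compatibility conditions imposed by \eqref{Lan} on the ansatz must be handled carefully. A secondary subtlety is the degeneration of the tensor $A$ on $\{\nabla \phi = 0\}$; justifying the pointwise identities across this set will require the $\varepsilon$-regularization of \cite{KoNi} already invoked in Section 3.
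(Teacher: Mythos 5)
Your proposal follows the paper's proof essentially verbatim: extend the $W$-entropy-information formula \eqref{wcnentropy} to complete manifolds with bounded geometry, use $\mathrm{Ric}\ge 0$ to force both non-negative integrands on the right-hand side to vanish at $t_0$, read off the Hessian identity $|\nabla\phi|^{p-2}\nabla_i\nabla_j\phi=\alpha(t_0)\,a_{ij}$, and invoke Theorem 6.19 of Kotschwar--Ni exactly as in the rigidity step of Theorem \ref{CEntForm}. The additional care you devote to recovering $\rho(t_0,\cdot)$ from the Hamilton--Jacobi equation and to propagating the identification in time via uniqueness of the Cauchy problem goes beyond what the paper records (it simply appeals to ``the same argument as in the proof of Theorem \ref{CEntForm}''), and the obstacles you flag there --- determining $\partial_t\phi(t_0,\cdot)$ and handling the degeneracy of $A$ on $\{\nabla\phi=0\}$ --- are genuine but are left equally implicit in the paper's own argument.
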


\begin{proof}
The proof is similar to the one of Theorem \ref{CEntForm}. Indeed, we need only to use the entropy dissipation formulas \eqref{dtE} and \eqref{dt2E} to prove the $W$-entropy-information formula \eqref{wcnentropy} in Theorem \ref{Ent-Riccati}  for smooth solutions $(\rho, \phi)$ with suitable growth condition of the Langevin deformation on complete Riemannian manifold with bounded geometry condition. 

On the other hand, under the assumption ${\rm Ric}\ge 0$, if the $W$-entropy-information quantity in \eqref{WEI} vanishes at some $t=t_0 >0$, then the $W$-entropy-information formula \eqref{wcnentropy} yields
\begin{equation*}
|\nabla \phi|^{p-2}\nabla_i\nabla_j \phi=\alpha(t)a_{ij}, 
\end{equation*}
which is equivalent to 
\begin{equation*}
\nabla_i\nabla_j \phi=\frac{\alpha(t)}{|\nabla \phi|^{p-2}}\left(g_{ij}+(q-2)\frac{\nabla_i\phi\nabla_j\phi}{|\nabla \phi|^{2}}\right). 
\end{equation*}
By \eqref{wcnentropy} and using the same argument as in the proof of Theorem \ref{CEntForm}, we can prove Theorem \ref{RT_WIE}. 
\end{proof}

\medskip
    \noindent{\bf Declaration on Conflict of interest}. The authors state that there is no conflict of interest.

\noindent{\bf Acknowledgements}. 
We would like to express our gratitude to Dr. S. Li for valuable discussions during  the preparation of this work. The first named author would like to thank the Department of Mathematics of Tohoku University for support during the revision of this manuscript. 


Rong Lei, Academy of Mathematics and Systems Science, Chinese Academy of Sciences, No. 55, Zhongguancun East Road, Beijing, 100190, China

\emph{E-mail:} leirong17@mails.ucas.edu.cn

Xiang-Dong Li, State Key Laboratory of Mathematical Sciences, Academy of Mathematics and Systems Science, Chinese Academy of Sciences, No. 55, Zhongguancun East Road, Beijing, 100190, China,  
and 
 School of Mathematics, University of Chinese Academy of Sciences, Beijing, 100049, China.

\emph{E-mail:} xdli@amt.ac.cn

Yu-Zhao Wang, School of Mathematics and  Statistics, Shanxi University, Taiyuan, 030006, Shanxi, China\\ and Key Laboratory of Complex Systems and Data Science of Ministry of Education , Shanxi University, Taiyuan, 030006, Shanxi, China.

\emph{E-mail:} wangyuzhao@sxu.edu.cn


\begin{thebibliography}{00}
\bibitem{AGS}
L. Ambrosio, N. Gigli, G. Savar\'e. \emph{Gradient flows in metric spaces and in the space of probability measures}, Lectures in Mathematics, Birkhauer, 2005.

\bibitem{Aubin}T. Aubin. \emph{Nonlinear Analysis on Manifolds, Monge–Ampère Equations}. Springer, Berlin, 1980

%
%
\bibitem{BB}
J.-D. Benamou, Y. Brenier. \emph{A computational fluid mechanics solution to the Monge Kantorovich mass transfer problem}. Numer. Math. 84, (2000),  375–393.
%

\bibitem{Brasco}
L. Brasco.  \emph{A survey on dynamical transport distances}, J. Math. Sci. 181 (2012),  no.6, 755–781.
%
\bibitem{Bre}H. Brezis. \emph{Functional analysis, Sobolev spaces and partial differential equations}. Springer Science \& Business Media, 2010.
%



%
\bibitem{EKS}  M. Erbar, K. Kuwada, K.-T. Sturm. \emph{On the equivalence of the entropic curvature-dimension condition and Bochner's inequality on metric measure spaces}. Invent. Math. 201 (2015), 993–1071.

\bibitem{JKO} R. Jordan, D. Kinderlehrer, F. Otto. \emph{The variational formulation of the Fokker-Planck equation}. SIAM J. Math. Anal. 29 (1998), no. 1, 1–17.

\bibitem{Kant}
L.V. Kantorovich, \emph{On the translocation of masses}. C.R. (Dokl.) Acad. Sci. URSS(N.S.) 37 (1942), 199–201. 



\bibitem{KR}
L.V. Kantorovich, G. S. Rubinstein. \emph{On a space of totally additive functions}. Vestn. Leningrad. Univ. 13 (1958), no. 7, 52–59.


\bibitem{KoNi}
B. Kotschwar, L. Ni. \emph{Gradient estimate for $p$-harmonic functions, $1/H$ flow and an entropy formula}, Ann. Sci. \'Ec. Norm. Sup\'er., 42 (2009), no. 1, 1–36.


%
%
\bibitem{Kato1975} T. Kato. \emph{The Cauchy problem for quasi-linear symmetric hyperbolic systems}. Arch. Rational Mech. Anal. 58 (1975), 181–205.
%

%


\bibitem{LLL24}
R. Lei, S.Z. Li, X.-D. Li.  \emph{ Langevin deformation  for  R\'enyi entropy on Wasserstein space over Riemannian manifolds}, arXiv:2410.20369v1, 2024.

%
%
%







\bibitem{LiLi4}
S.  Li, X.-D. Li. \emph{ $W$-entropy formulas on super Ricci flows and Langevin deformation on Wasserstein space over Riemannian manifolds}, Sci. China Math.  61 (2018), no. 8, 1385–1406.

\bibitem{LiLi3}
S. Li, X.-D. Li. \emph{$W$-entropy and Langevin deformation on Wasserstein space over Riemannian  manifolds},  Probab. Theory Relat. Fields, 188 (2024), 911–955.



%


\bibitem{LiXD2}
X. -D. Li. \emph{Perelman's entropy formula for the Witten Laplacian on Riemannian manifolds via Bakry-Emery Ricci curvature}, Math. Ann. 353 (2012),  no. 2, 403–437.

%
%
%

\bibitem{Lott}
J. Lott. \emph{Optimal transport and Perelman's reduced volume}, Calc. Var. and Partial Differential Equations 36 (2009), 9–84.

\bibitem{LV}
J. Lott, C. Villani. \emph{ Ricci curvature for metric-measure spaces via optimal transport}, Annals of Math. 169 (2009), 903–991.
%
%
%
\bibitem{McC}R. J. McCann. \emph{Polar factorization of maps on Riemannian manifolds}. Geom. Funct. Anal.11 (3) (2001), 589–608.


\bibitem{Majda1984} A. Majda. \emph{Compressible fluid flow and systems of conservation laws in several space variables}, volume 53. Springer Science \& Business Media, 2012.

%

\bibitem{Ni}
L. Ni. \emph{The entropy formula for linear equation},  J. Geom. Anal. 14 (2004), no.1, 87–100.

%
%
\bibitem{Otto01}
F. Otto.  \emph{The geometry of dissipative evolution equations: the porous medium equation}, Commun. Partial Differ. Equ. 26 (2001), 101–174.

\bibitem{OV}F. Otto, C. Villani. \emph{Generalization of an inequality by Talagrandand links with the logarithmic Sobolev inequality}. J. Funct. Anal. 173 (2000), 361–400. 


\bibitem{Perelman}
G. Perelman. \emph{The entropy formula for the Ricci flow and its geometric applications}, arXiv.org/abs/maths0211159.

\bibitem{STW}T. Sideris, B. Thomases, D. H. Wang. \emph{Long time behavior of solutions to the 3D compressible Euler equations with damping}. Commun. PDE 28(3-4) (2003), 795–816.
 
\bibitem{RensSturm} M.-K.  von Renesse, K.-T. Sturm. \emph{Transport inequalities, gradient estimates, entropy, and Ricci curvature} Commun. Pure Appl. Math. 58 (7) (2005), 923–940.

 \bibitem{Sturm} K.-T. Sturm. \emph{On the geometry of metric measure spaces I}. Acta Math. 196(1) (2006),  
 65-131, and II. Acta Math.196(1) (2006), 133–177.

%
%
 
  \bibitem{V1}
 C. Villani. \emph{Topics in optimal transportation}. Graduate Studies in Mathematics, 58. American Mathematical Society, Providence, RI, 2003.

\bibitem{V2} C. Villani. \emph{ Optimal transport. Old and new}. Grundlehren der Mathematischen Wissenschaften, 338. Springer-Verlag, Berlin, 2009.

\bibitem{WY} W. Wang, T. Yang, \emph{The pointwise estimates of solutions for Euler equations with damping in multi-dimensions}. J. Differential Equations 173 (2001), no. 2, 410–450.



\end{thebibliography}
\end{document}